\documentclass[12pt, a4paper]{amsart}

\usepackage{amssymb, amscd, amsthm}

\newtheorem{Thm}{Theorem}[section]
\newtheorem{Prop}[Thm]{Proposition}

\newtheorem{Lem}[Thm]{Lemma}
\theoremstyle{remark}
\newtheorem{Rmk}[Thm]{Remark}
\newtheorem*{Notation}{Notation}
\newtheorem*{Ack}{Acknowledgement}

\newcommand{\FonP}[1]{$(\mathrm{P}_{#1})$}
\newcommand{\FonPtr}[1]{$(\mathrm{P}_{#1}^{\mathrm{tr}})$}
\newcommand{\Order}{\mathcal{O}}
\newcommand{\ideal}{\mathfrak{a}}
\newcommand{\maxid}{\mathfrak{p}}
\newcommand{\into}{\hookrightarrow}
\newcommand{\onto}{\twoheadrightarrow}
\newcommand{\isomto}{\overset{\sim}{\to}}
\newcommand{\compose}{\circ}
\newcommand{\tensor}{\otimes}
\newcommand{\loccit}{loc.\ cit.}
\newcommand{\algcl}[1]{\overline{#1}}
\newcommand{\Z}{\mathbb{Z}}
\newcommand{\Q}{\mathbb{Q}}
\newcommand{\ab}{\mathrm{ab}}
\newcommand{\sep}{\mathrm{sep}}
\newcommand{\ur}{\mathrm{ur}}
\newcommand{\alg}[1]{\mathbf{#1}}
\newcommand{\pfpqc}[1]{(\mathrm{Perf} / #1)_{\mathrm{fpqc}}}
\DeclareMathOperator{\Gal}{Gal}
\DeclareMathOperator{\Hom}{Hom}

\DeclareMathOperator{\Ker}{Ker}

\DeclareMathOperator{\Ext}{Ext}

\title[Fontaine's property (P$_{m}$)]
	{Fontaine's property (P$_{m}$) \\ at the maximal ramification break}

\author{Takashi Suzuki}
\address{
	Department of Mathematics, University of Chicago,
	5734 S University Ave,
	Chicago, IL 60637, USA
}
\email{suzuki@math.uchicago.edu}

\author{Manabu Yoshida}
\address{
	Kyushu Sangyo High School,
	Murasaki, Chikushino, Fukuoka, 818-8585, Japan
}
\email{manabuyoshida3@gmail.com}
\thanks{The second author is supported
in part by JSPS Core-to-Core Program 18005}

\date{September 19, 2013}
\subjclass[2010]{Primary: 11S15; Secondary: 11S31}
\keywords{Fontaine's property (Pm); ramification; local class field theory}


\begin{document}

\begin{abstract}
	We completely determine which extension of local fields
	satisfies Fontaine's property (P$_{m}$) for a given real number $m$.
	A key ingredient of the proof is the local class field theory of Serre and Hazewinkel.
\end{abstract}

\maketitle


\section{Introduction}
Let $K$ be a complete discrete valuation field with perfect residue field $k$.
All algebraic extensions of $K$ are taken inside a fixed algebraic closure of $K$.
We denote by $v_{K}$ the normalized valuation of $K$
and its extension to any algebraic extension of $K$.
For an algebraic extension $E / K$ and a non-negative real number $m$,
we denote by $\Order_{E}$ the ring of integers of $E$
and by $\ideal_{E / K}^{m}$ the set of elements
$x \in \Order_{E}$ with $v_{K}(x) \ge m$.
In \cite{Fon85}, Fontaine gave upper bounds for
ramification of Galois representations of $K$
appearing in the generic fibers of finite flat group schemes over $\Order_{K}$.
For this, he considered the following property \FonP{m}
for a finite Galois extension $L / K$ for each non-negative real number $m$:
\begin{itemize}
	\item[\FonP{m}]
		If $E$ is an algebraic extension of $K$
		and if there exists an $\Order_{K}$-algebra homomorphism
		$\Order_{L} \to \Order_{E} / \ideal_{E / K}^{m}$,
		then $L \subset E$.
\end{itemize}
By results of Fontaine (\cite[Proposition 1.5]{Fon85}) and the second author (\cite[Theorem 5.2]{Yos10}),
every finite Galois extension $L / K$ satisfies \FonP{m} for $m > u_{L / K}$
and does not satisfy \FonP{m} for $m < u_{L / K}$,
where $u_{L / K}$ is the maximal upper ramification break for $L / K$
(see Notation below for the definition).
For $m = u_{L/ K}$, it is easy to see that
trivial $L / K$ satisfies \FonP{m}
and non-trivial tame $L / K$ does not satisfy \FonP{m}.
Here we call $L / K$ tame if $u_{L / K} \le 1$
and wild if $u_{L / K} > 1$.

In this paper, we determine which wild $L / K$ satisfies \FonP{m} for $m = u_{L / K}$.
The result is the following,
which shows that the answer to the question depends
(only) on the residue field of $K$.

\begin{Thm}
	\label{Thm:Main}
	Let $K$ be a complete discrete valuation field
	with perfect residue field $k$ of characteristic $p > 0$
	and let $L$ be a finite wild Galois extension of $K$.
	Then $L / K$ satisfies \FonP{m} for $m = u_{L / K}$
	if and only if any finite extension of $k$ has no Galois extension of degree $p$.
\end{Thm}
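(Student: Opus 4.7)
The plan is to reformulate property \FonP{u} (for $u = u_{L/K}$) as a vanishing condition on the Artin--Schreier cokernels $k'/\wp(k')$ as $k'$ ranges over finite extensions of $k$, where $\wp(x) = x^{p} - x$; the bridge will be the Serre--Hazewinkel local class field theory, which views the unit groups $U_{K}^{n}$ as pro-algebraic groups over $k$ and identifies each top graded piece with the additive group over $k$.

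First, I would reduce to the case where $L/K$ is a totally ramified elementary abelian $p$-extension with a single upper break at $u$. Since $L/K$ is wild, the top ramification subgroup $G^{u} \subseteq G := \Gal(L/K)$ is non-trivial and, being at the very top of the filtration whose successive quotients are elementary abelian $p$, is itself elementary abelian $p$. Letting $L' = L^{G^{u}}$ and separating out the maximal unramified subextension, I would work with $L/L'$ in place of $L/K$; since \FonP{u} does not obviously descend along subextensions, this step requires combining functoriality in $E$ with the Herbrand rescaling that converts $u_{L/K}$ into $u_{L/L'}$.

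In the reduced setting, Serre--Hazewinkel CFT identifies $L/K$ with a surjective $\mathbb{F}_{p}$-linear map $\phi: U_{K}^{u}/U_{K}^{u+} \onto G$, where the graded piece is non-canonically $\mathbb{G}_{a,k}$. For an algebraic extension $E/K$ with residue field $k'$, I would show that an $\Order_{K}$-algebra homomorphism $\Order_{L} \to \Order_{E}/\ideal_{E/K}^{u}$ corresponds under Serre--Hazewinkel to data lifting $\phi$ modulo an Artin--Schreier obstruction living in $k'/\wp(k')$; this obstruction vanishes precisely when the almost embedding extends to some level strictly greater than $u$, at which point the previously known \FonP{m} for $m > u$ due to Fontaine and Yoshida produces $L \subset E$.

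The two directions then follow. If every finite extension $k'/k$ satisfies $k' = \wp(k')$, every obstruction vanishes and \FonP{u} holds. Conversely, given a nonzero class in some $k'/\wp(k')$, the corresponding Artin--Schreier extension yields a concrete algebraic extension $E/K$ with residue field $k'$ together with a non-liftable almost embedding $\Order_{L} \to \Order_{E}/\ideal_{E/K}^{u}$, violating \FonP{u}. The hardest step will be the explicit identification between the lifting obstruction for the almost embedding and the Artin--Schreier cokernel $k'/\wp(k')$, since this hinges on unraveling the Serre--Hazewinkel reciprocity at the top graded piece while keeping track of how the filtration $U_{K}^{\bullet}$ behaves under the base change from $K$ to $E$.
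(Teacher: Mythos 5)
Your outline shares the paper's overall strategy -- reduce to a single-break (hence totally ramified, elementary abelian) extension and then use Serre--Hazewinkel class field theory to tie the answer to the Artin--Schreier cokernels $k'/\wp(k')$ -- but the two load-bearing steps are either misstated or missing. First, for a general perfect residue field there is no surjection $\phi\colon U_{K}^{u}/U_{K}^{u+}\onto G$ attached to $L/K$: when $k$ is, say, algebraically closed, abelian extensions correspond to isogenies of the pro-algebraic unit group (quotients of its $\pi_{1}$), not to quotients of its group of $k$-points. The correct statements are that $G\cong\pi_{0}^{k}$ of the kernel of the truncated norm map and that $K^{\times}/N_{L/K}L^{\times}\cong\Hom(G_{k},G)\cong(k/\wp(k))^{\dim_{\mathbb{F}_{p}}G}$; so the object whose ``lifting modulo an Artin--Schreier obstruction'' you propose to study must be set up differently. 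The paper instead attaches to the almost-embedding $\eta$ a norm map $N_{\eta}$ on truncated unit groups (this requires proving that $\Order_{E}/\ideal_{E/K}^{m}$ is finite free over $\Order_{L}/\ideal_{L/K}^{m}$), promotes it to a morphism of short exact sequences of pro-algebraic groups, and reads off a commutative square of Galois groups from the homotopy exact sequence; the class in $k'/\wp(k')$ enters only as the obstruction to choosing a prime of $L'$ whose norm matches $N_{E/K}\pi_{E}$, and the conclusion is $L\subset EK'$ \emph{without ever lifting $\eta$ to a level $>u$}. Your central claim -- that the obstruction vanishes precisely when $\eta$ extends to precision strictly greater than $u$, so that Fontaine's result applies -- is exactly the hard content and is left entirely unproved; it is not even clear that ``$L\subset E$'' forces a given level-$u$ almost-embedding to lift.

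Second, the converse direction is incomplete as written: exhibiting a \emph{non-liftable} almost-embedding does not violate \FonP{u}; you must produce an $E$ admitting an almost-embedding with $L\not\subset E$. The paper does this by an explicit, elementary construction: twist the constant term of the Eisenstein polynomial of $\pi_{L}$ by a unit $u\in U_{K}^{m-1}$; the resulting field $E$ of degree $[L:K]$ carries an almost-embedding at level $m$, and if \FonPtr{m} held one would get $L=E$ by a degree count and hence $u\in N_{L/K}U_{L}^{\psi_{L/K}(m-1)}$, forcing $\Hom(G_{k},G)=0$. Some such explicit witness (together with the reduction to totally ramified $E$ via unramified base change, which is what brings \emph{all} finite extensions $k'$ of $k$ into play) is needed to complete your argument.
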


Even if there exists a finite extension of $k$ that has a Galois extension of degree $p$
(for example, if $k$ is finite),
the extension $L / K$ still satisfies a property
that is weaker than \FonP{m} (see Remark \ref{Rmk:GeneralPerfect}).

Although it seems to be possible to give an elementary proof of this theorem,
we prove it by using the local class field theory of Serre and Hazewinkel
(\cite{Ser61}, \cite[Appendice]{DG70}).
The authors believe that this usage of that theory is interesting.
The idea is simple, so let us show here
how the local class field theory of Serre and Hazewinkel works,
by giving a sketch of proof of Theorem \ref{Thm:Main}
for abelian $L / K$ with algebraically closed $k$.
To prove that such $L / K$ satisfies \FonP{m} for $m = u_{L / K}$,
let $E / K$ be a finite extension and
let $\eta \colon \Order_{L} \to \Order_{E} / \ideal_{E / K}^{m}$ be
an $\Order_{K}$-algebra homomorphism.
It can be shown that $\eta$ factors through the quotient
	\[
			\eta
		\colon
			\Order_{L} / \ideal_{L / K}^{m}
		\to
			\Order_{E} / \ideal_{E / K}^{m},
	\]
which makes $\Order_{E} / \ideal_{E / K}^{m}$
a finite free module over $\Order_{L} / \ideal_{L / K}^{m}$.
Hence we obtain the corresponding norm map
	\[
			N_{\eta}
		\colon
			(\Order_{E} / \ideal_{E / K}^{m})^{\times}
		\to
			(\Order_{L} / \ideal_{L / K}^{m})^{\times}
	\]
on the groups of units,
which commutes with the natural norm maps from both of the groups to
$(\Order_{K} / \ideal_{K / K}^{m})^{\times}$.
These groups of units can be viewed as quasi-algebraic groups over $k$
in the sense of Serre (\cite[\S 1]{Ser60}).
The norms maps are homomorphisms of quasi-algebraic groups.
Applying the fundamental group functor of Serre (\cite[\S 6.1]{Ser60})
to these quasi-algebraic groups
and using his local class field theory (\cite{Ser61}),
we see that the map $N_{\eta}$ induces a homomorphism
	\[
			\Gal(E \cap K^{\ab} / K)
		\to
			\Gal(L / K)
	\]
that commutes with the restriction maps from $\Gal(K^{\ab} / K)$ to both of the groups.
Here $K^{\ab}$ is the maximal abelian extension of $K$.
This implies that $L \subset E \cap K^{\ab} \subset E$.
Thus $L / K$ satisfies \FonP{m}.

If we argue similarly in the case $k$ is a general perfect field,
what we can naturally get is slightly weaker than Theorem \ref{Thm:Main}, namely
$L$ is contained in the composite of $E$ and some unramified extension of $K$
that we cannot control.
This is because the local class field theory of Hazewinkel describes
only the inertia subgroup of $\Gal(K^{\ab} / K)$.
To control this unramified extension,
the authors refined the theory in a way that
allows us to describe the whole group $\Gal(K^{\ab} / K)$ canonically
and wrote it in a separate paper \cite{SY12}.
Using this result, we can prove Theorem \ref{Thm:Main} in a way similar to the above.

The organization of this paper is as follows.
In Section \ref{Sect:RedAb}, we reduce the theorem to the case
where $L / K$ has only one jump in the ramification filtration of its Galois group.
In this case, $L / K$ is an abelian extension.
In Section \ref{Sect:RedTR}, we reduce the theorem
so that we need only totally ramified $E / K$
for the extensions appearing in the definition of \FonP{m}.
In Section \ref{Sect:LCFT},
we recall the notation and results of \cite{SY12}.
With this, in Section \ref{Sect:RedCase},
we prove Theorem \ref{Thm:Main} for the reduced case.

\begin{Ack}
	The authors would like to express their sincere gratitude
	to Professor Kato and Professor Taguchi for having helpful discussions.
	They would like to thank Preston Wake for reading a draft of the paper
	and giving comments on it.
\end{Ack}

\begin{Notation}
	For a complete discrete valuation field $K$ with perfect residue field $k$,
	we denote by $v_{K}$ the discrete valuation
	with $v_{K}(K^{\times}) = \Z$,
	by $\Order_{K}$ the ring of integers,
	by $\maxid_{K}$ the maximal ideal,
	by $U_{K}$ the group of units,
	and by $U_{K}^{n}$ the group of $n$-th principal units ($n \ge 0$).
	All algebraic extensions of $K$ are
	taken inside a fixed algebraic closure $\algcl{K}$ of $K$.
	We denote by $K^{\sep}$ ($\subset \algcl{K}$) the separable closure of $K$,
	by $K^{\ur}$ the maximal unramified extension of $K$
	and by $K^{\ab}$ the maximal abelian extension of $K$.
	The same notation and convention are applied to $k$.
	The absolute Galois group $\Gal(K^{\sep} / K)$ (resp.\ $\Gal(\algcl{k} / k)$)
	is denoted by $G_{K}$ (resp.\ $G_{k}$).
	We extend $v_{K}$ to $\algcl{K}$.
	Note that $v_{K}(\algcl{K}^{\times}) = \Q \supset \Z$.
	For an algebraic extension $E / K$,
	we denote by $\Order_{E}$ the ring of integers,
	by $\hat{E}$ the completion of $E$,
	and by $e_{E / K}$ the ramification index of $E / K$.
	We define
		$
				\ideal_{E / K}^{m}
			=
				\{
						x \in \Order_{E}
					\,|\,
						v_{K}(x) \ge m
				\}
		$
	for a non-negative real number $m$.
	If $E / K$ is totally ramified,
	an extension of the form $E K' / K'$ for an unramified extension $K' / K$
	will be called an unramified base change of $E / K$.
	For a finite Galois extension $L / K$,
	let $\Gal(L / K)_{i}$, $\Gal(L / K)^{u}$ (resp.\ $\varphi_{L / K}$, $\psi_{L / K}$)
	be the ramification groups (resp.\ the Herbrand functions)
	that are studied in \cite{Ser79}.
	Also, let $\Gal(L / K)_{(i)}$, $\Gal(L / K)^{(u)}$
	(resp.\ $\tilde{\varphi}_{L / K}$, $\tilde{\psi}_{L / K}$)
	be the ramification groups (resp.\ the Herbrand functions)
	that are defined in \cite{Fon85}.
	Their relations are as follows: for real numbers $i, u \ge -1$, we have
		\begin{gather*}
				\Gal(L / K)_{i} = \Gal(L / K)_{((i + 1) / e_{L / K})},
			\quad
				\Gal(L / K)^{u} = \Gal(L / K)^{(u + 1)},
			\\
				\varphi_{L / K}(i) = \tilde{\varphi}_{L / K}((i + 1) / e_{L / K}) - 1,
			\quad
				\psi_{L / K}(u) = e_{L / K} \tilde{\psi}_{L / K}(u + 1) - 1.
		\end{gather*}
	We define the maximal upper ramification break for $L / K$
	to be the maximal real number $u_{L / K}$
	with non-trivial $\Gal(L / K)^{(u_{L / K})}$.
	Set $i_{L / K} = \tilde{\psi}_{L / K}(u_{L / K})$;
	it is the maximal real number with non-trivial $\Gal(L / K)_{(i_{L / K})}$.
\end{Notation}


\section{Reduction to abelian $L / K$}
\label{Sect:RedAb}
The following proposition allows us
to reduce the proof of Theorem \ref{Thm:Main}
to the case where $L / K$ has only one jump
in the ramification filtration of its Galois group.

\begin{Prop}
	\label{Prop:RedAb}
	Let $L / K$ be a finite Galois extension of complete discrete valuation fields
	with perfect residue fields.
	Let $M$ be the $\Gal(L / K)^{(u_{L / K})}$-fixed subfield of $L$.
	Then $L / K$ satisfies \FonP{m} for $m = u_{L / K}$
	if and only if $L / M$ satisfies \FonP{m} for $m = u_{L / M}$.
\end{Prop}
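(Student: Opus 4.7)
The plan is to compare $(P_{u_{L/K}})$ for $L/K$ with $(P_{u_{L/M}})$ for $L/M$ by relating the ideals $\mathfrak{a}_{E/K}^{u_{L/K}}$ and $\mathfrak{a}_{E/M}^{u_{L/M}}$, combined with the Fontaine--Yoshida result $(P_{m})$ for $m > u_{M/K}$.

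The ramification-theoretic setup is as follows. The subgroup $H := \Gal(L/K)^{(u_{L/K})}$ is normal (upper-numbering ramification groups always are), so $M = L^{H}$ is Galois over $K$, and by Herbrand's theorem $\Gal(M/K)^{(u_{L/K})} = 1$, whence $u_{M/K} < u_{L/K}$. The transitivity of the Herbrand functions $\tilde\psi$ gives $u_{L/M} / e_{M/K} = i_{M/K} + (u_{L/K} - u_{M/K}) \le u_{L/K}$, which yields $\mathfrak{a}_{E/K}^{u_{L/K}} \subseteq \mathfrak{a}_{E/M}^{u_{L/M}}$ for every algebraic $E/M$, and hence a natural surjection $\mathcal{O}_{E}/\mathfrak{a}_{E/K}^{u_{L/K}} \twoheadrightarrow \mathcal{O}_{E}/\mathfrak{a}_{E/M}^{u_{L/M}}$.

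For the direction $(\Leftarrow)$, take an $\mathcal{O}_{K}$-algebra map $\eta \colon \mathcal{O}_{L} \to \mathcal{O}_{E}/\mathfrak{a}_{E/K}^{u_{L/K}}$ with $E/K$ algebraic. Since $u_{L/K} > u_{M/K}$, applying Fontaine--Yoshida to $\eta|_{\mathcal{O}_{M}}$ forces $M \subset E$ and lifts $\eta|_{\mathcal{O}_{M}}$ to a genuine $\mathcal{O}_{K}$-algebra embedding $\mathcal{O}_{M} \hookrightarrow \mathcal{O}_{E}$. Using that $M/K$ is Galois, this embedding differs from the natural inclusion by some $\sigma \in \Gal(M/K)$; extending $\sigma$ to $\tilde\sigma \in \Gal(L/K)$ and replacing $\eta$ by $\eta \circ \tilde\sigma^{-1}$, we may assume that $\eta$ is an $\mathcal{O}_{M}$-algebra map. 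Composing with the surjection above and invoking $(P_{u_{L/M}})$ for $L/M$ then gives $L \subset E$.

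The converse I expect to be the main obstacle. By contraposition, a counter-example $(E, \eta)$ for $L/M$ at $u_{L/M}$ should yield one for $L/K$ at $u_{L/K}$; the natural attempt is to lift $\eta$ through the above surjection to an $\mathcal{O}_{K}$-algebra map $\mathcal{O}_{L} \to \mathcal{O}_{E}/\mathfrak{a}_{E/K}^{u_{L/K}}$. Writing $\mathcal{O}_{L} = \mathcal{O}_{M}[\pi_{L}]$ with Eisenstein polynomial $f \in \mathcal{O}_{M}[X]$, any lift $\tilde a \in \mathcal{O}_{E}$ of $\eta(\pi_{L})$ satisfies $v_{M}(f(\tilde a)) \ge u_{L/M}$, and the task is to correct $\tilde a$ additively so that $v_{K}(f(\tilde a)) \ge u_{L/K}$. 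A Newton-type iteration controlled by the valuation of the different $d_{L/M}$ of the single-jump extension $L/M$ should produce such a correction within $\mathcal{O}_{E}$, or else a careful enlargement $E' \supset E$ not introducing $L$ is invoked; verifying that this step goes through is the delicate point.
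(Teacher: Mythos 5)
Your setup is sound: the inequality $u_{L/M}/e_{M/K}=\tilde{\psi}_{M/K}(u_{L/K})\le u_{L/K}$, hence the surjection $\Order_{E}/\ideal_{E/K}^{u_{L/K}}\onto\Order_{E}/\ideal_{E/M}^{u_{L/M}}$, is correct, and your direction ``\FonP{m} for $L/M$ implies \FonP{m} for $L/K$'' is essentially right. One imprecision there: \FonP{m} for $M/K$ only yields $M\subset E$; the claim that $\eta|_{\Order_{M}}$ is \emph{lifted} by a genuine embedding, and that after twisting by $\tilde{\sigma}^{-1}$ ``$\eta$ is an $\Order_{M}$-algebra map,'' is too strong at the level of $\Order_{E}/\ideal_{E/K}^{u_{L/K}}$. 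What the proof of Fontaine's bound actually gives is $v_{K}(\beta_{M}-\sigma(\alpha_{M}))\ge\tilde{\psi}_{M/K}(u_{L/K})=u_{L/M}/e_{M/K}$ for some $\sigma$, so the twisted $\eta|_{\Order_{M}}$ agrees with the inclusion only modulo the coarser ideal $\ideal_{E/M}^{u_{L/M}}$ --- which is enough, but only because you compose with the surjection afterwards.

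The genuine gap is the converse, which you explicitly leave open, and the Newton-type correction you sketch is the wrong tool: no correction of the lift is needed at all. The missing ingredient is the second proposition of \S 1.3 of \cite{Fon85}: for a generator $\alpha$ of $\Order_{L}$ over $\Order_{K}$ with minimal polynomial $f$ and any $\beta\in\Order_{E}$, one has $\tilde{\psi}_{L/K}(v_{K}(f(\beta)))=v_{K}(\beta-\sigma(\alpha))$ for some $\sigma\in\Gal(L/K)$, and likewise over $M$. Given an $\Order_{M}$-algebra map $\eta\colon\Order_{L}\to\Order_{E}/\ideal_{E/M}^{u_{L/M}}$ and a lift $\beta$ of $\eta(\alpha)$, the hypothesis $v_{M}(g(\beta))\ge u_{L/M}$ (with $g$ the minimal polynomial over $M$) gives $v_{M}(\beta-\sigma(\alpha))\ge\tilde{\psi}_{L/M}(u_{L/M})=i_{L/M}=e_{M/K}\,i_{L/K}$ for some $\sigma\in\Gal(L/M)$, hence $v_{K}(\beta-\sigma(\alpha))\ge i_{L/K}$, hence $v_{K}(f(\beta))\ge\tilde{\varphi}_{L/K}(i_{L/K})=u_{L/K}$; so $\alpha\mapsto\beta$ with the \emph{same} $\beta$ already defines the required $\Order_{K}$-algebra map $\Order_{L}\to\Order_{E}/\ideal_{E/K}^{u_{L/K}}$ --- no iteration, no enlargement of $E$. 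This is exactly how the paper argues (in both directions, which become symmetric: in the forward one it sends $\sigma(\alpha)\mapsto\beta$ for the $\sigma\in\Gal(L/K)$ produced by the same proposition, which replaces your twisting step). You should incorporate this proposition; without it the converse direction is not proved.
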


Note that any finite extension of the residue field of $K$
has no Galois extension of degree $p$
if and only if the same holds for $M$,
since the maximal pro-$p$ quotient of the absolute Galois group
of a field of characteristic $p$ is pro-$p$ free
(\cite[Chapter 2, \S 2.2, Corollary 1]{Ser02}).
Hence Proposition \ref{Prop:RedAb} allows us
to reduce the proof of Theorem \ref{Thm:Main} for $L / K$
to that for $L / M$.

The proposition is a direct consequence of the following lemma.

\begin{Lem}
	Let $K$, $L$, $M$ be as in Proposition \ref{Prop:RedAb}.
	Let $E$ be an algebraic extension of $K$.
	Then the following are equivalent:
	\begin{enumerate}
		\item \label{Enum:L/K}
			There exists an $\Order_{K}$-algebra homomorphism
			$\Order_{L} \to \Order_{E} / \ideal_{E / K}^{u_{L / K}}$.
		\item \label{Enum:L/M}
			The field $M$ is contained in $E$ and
			there exists an $\Order_{M}$-algebra homomorphism
			$\Order_{L} \to \Order_{E} / \ideal_{E / M}^{u_{L / M}}$.
	\end{enumerate}
\end{Lem}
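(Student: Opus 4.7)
The plan is to reduce both conditions to a comparison of the ideals $\ideal_{E/K}^{u_{L/K}}$ and $\ideal_{E/M}^{u_{L/M}}$ inside $\Order_E$, and then to translate a homomorphism between the two settings. As a preliminary computation, I would use the transitivity $\tilde\varphi_{L/K} = \tilde\varphi_{M/K} \circ \tilde\varphi_{L/M}$ of Herbrand functions and the conversion formulas between Serre's and Fontaine's indexings recorded in the Notation to obtain the identity $u_{L/M}/e_{M/K} = \tilde\psi_{M/K}(u_{L/K})$. Since the slope of $\tilde\psi_{M/K}$ never exceeds $1$ (because the slope of $\psi_{M/K}$ is bounded by $|G_0^{M/K}| = e_{M/K}$), we have $\tilde\psi_{M/K}(u_{L/K}) \le u_{L/K}$, which gives the ideal inclusion $\ideal_{E/K}^{u_{L/K}} \subset \ideal_{E/M}^{u_{L/M}}$ whenever $M \subset E$, together with a projection of the corresponding quotient rings.

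For (1) $\Rightarrow$ (2), restrict the given $\eta$ to $\Order_M$ to obtain an $\Order_K$-algebra homomorphism $\Order_M \to \Order_E/\ideal_{E/K}^{u_{L/K}}$. Since $u_{L/K} > u_{M/K}$, the Fontaine--Yoshida result cited in the introduction gives \FonP{u_{L/K}} for $M/K$, so $M \subset E$; moreover, the same results show that $\eta|_{\Order_M}$ is the reduction modulo $\ideal_{E/K}^{u_{L/K}}$ of the inclusion $\Order_M \hookrightarrow \Order_E$ composed with some $\sigma \in \Gal(M/K)$. Lifting $\sigma$ to $\tilde\sigma \in \Gal(L/K)$ and replacing $\eta$ by $\eta \circ \tilde\sigma^{-1}$, we may assume that $\eta|_{\Order_M}$ is the natural reduction, so that $\eta$ is $\Order_M$-linear modulo $\ideal_{E/K}^{u_{L/K}}$. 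Composing with the projection $\Order_E/\ideal_{E/K}^{u_{L/K}} \twoheadrightarrow \Order_E/\ideal_{E/M}^{u_{L/M}}$ from the previous paragraph then supplies the $\Order_M$-homomorphism demanded by (2).

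The converse (2) $\Rightarrow$ (1) is more delicate because it requires refining the target to the strictly smaller ideal $\ideal_{E/K}^{u_{L/K}}$. Writing $\Order_L = \Order_M[\alpha]$ (possible because $L/M$ is totally wildly ramified and hence monogenic), let $f \in \Order_M[X]$ be the minimal polynomial of $\alpha$, let $\alpha = \alpha_1, \dots, \alpha_n$ be its roots, and choose a lift $\beta \in \Order_E$ of $\eta(\alpha)$. Then
\[
\sum_{i=1}^{n} v_M(\beta - \alpha_i) = v_M(f(\beta)) \ge u_{L/M}.
\]
Since $L/M$ has only one upper ramification break, the pairwise distances $v_M(\alpha_i - \alpha_j)$ all equal $i_{L/M}$ and $u_{L/M} = n \cdot i_{L/M}$. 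I would then show, combining the ultrametric inequality with the fact that $\eta$ is an $\Order_M$-\emph{algebra} homomorphism rather than merely an approximate-root assignment, that some $v_K(\beta - \alpha_{i_0})$ strictly exceeds $i_{L/K}$; Krasner's lemma then gives $L = M(\alpha_{i_0}) \subset M(\beta) \subset E$, and this inclusion reduces modulo $\ideal_{E/K}^{u_{L/K}}$ to the $\Order_K$-homomorphism required by (1).

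The main obstacle is the final step of (2) $\Rightarrow$ (1): the ``equidistant'' case in which every $v_M(\beta - \alpha_i)$ equals $i_{L/M}$, saturating the sum bound and leaving the ultrametric unable to single out a strictly closest root. Disposing of this case is where the ring-homomorphism structure of $\eta$ must be exploited in full; the remainder of the proof is a careful but routine bookkeeping with ramification invariants.
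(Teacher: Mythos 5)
Your direction $(1)\Rightarrow(2)$ is essentially sound, and in fact close to the paper's: the identity $\tilde\psi_{M/K}(u_{L/K})=u_{L/M}/e_{M/K}$ you derive is correct and is the upper-numbering avatar of the identity $i_{L/M}=e_{M/K}\,i_{L/K}$ that the paper uses. Two caveats. First, your claim that $\eta|_{\Order_M}$ \emph{is} the reduction of a genuine embedding modulo $\ideal_{E/K}^{u_{L/K}}$ is stronger than what Fontaine's \S 1.3 proposition gives: a lift of $\eta(\alpha_M)$ is only within $v_K$-distance $\tilde\psi_{M/K}(u_{L/K})$ of a conjugate of $\alpha_M$, and since $\tilde\psi_{M/K}\le\mathrm{id}$ this yields agreement only modulo the larger ideal $\ideal_{E/M}^{u_{L/M}}$ --- which happens to be exactly what your projection argument needs, so the step survives, but you should state it at that precision. (The paper avoids the issue by working with a single generator of $\Order_L$ over $\Order_K$ and transferring the estimate through $i_{L/M}=e_{M/K}\,i_{L/K}$.)

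The direction $(2)\Rightarrow(1)$, however, has a fatal flaw: you are trying to prove the wrong statement. Assertion (1) does \emph{not} say $L\subset E$; it only asserts the existence of an $\Order_K$-algebra homomorphism $\Order_L\to\Order_E/\ideal_{E/K}^{u_{L/K}}$. Indeed, the entire point of the paper is that such a homomorphism can exist with $L\not\subset E$ --- this happens precisely when \FonP{u_{L/K}} fails, e.g.\ whenever $k$ is finite (Theorem \ref{Thm:Main} and Proposition \ref{Prop:RedMainTh}(\ref{Ass:OnlyIf}), where an Eisenstein perturbation produces exactly such an $E\ne L$). So the ``equidistant case'' you flag as the remaining obstacle is not a technicality to be disposed of by ``exploiting the ring-homomorphism structure in full'': it is exactly the case in which $\beta$ determines no root, $L\not\subset E$, and yet (1) still holds. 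No refinement of the Krasner argument can close this gap. The correct route, as in the paper, bypasses any field containment: from $v_M(g(\beta))\ge u_{L/M}$ one gets $v_M(\beta-\sigma(\alpha))\ge\tilde\psi_{L/M}(u_{L/M})=i_{L/M}$ for some conjugate $\sigma(\alpha)$ (this holds in the equidistant case too, with equality), hence $v_K(\beta-\sigma(\alpha))\ge i_{L/K}$, hence $v_K(f(\beta))\ge\tilde\varphi_{L/K}(i_{L/K})=u_{L/K}$ for the minimal polynomial $f$ over $K$, and then $\alpha\mapsto\beta$ \emph{defines} the required homomorphism into $\Order_E/\ideal_{E/K}^{u_{L/K}}$. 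You should replace your Krasner step by this direct well-definedness check.
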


\begin{proof}
	$(\ref{Enum:L/K}) \implies (\ref{Enum:L/M})$.
	First we show that $M$ is contained in $E$.
	The case $L / K$ is unramified is trivial,
	so we assume $L / K$ is not unramified.
	Then $u_{L / K} > u_{M / K}$.
	By assumption, there exists an $\Order_{K}$-algebra homomorphism
	$\eta \colon \Order_{L} \to \Order_{E} / \ideal_{E / K}^{u_{L / K}}$.
	Consider the composite of the inclusion $\Order_{M} \into \Order_{L}$ and $\eta$.
	The extension $M / K$ satisfies \FonP{m} for $m = u_{L / K} > u_{M / K}$.
	Hence $M$ is contained in $E$.
	
	Next we show the existence of an $\Order_{M}$-algebra homomorphism
	$\Order_{L} \to \Order_{E} / \ideal_{E / M}^{u_{L / M}}$.
	Let $\alpha$ be a generator of $\Order_{L}$ as an $\Order_{K}$-algebra,
	let $f$ be the minimal polynomial of $\alpha$ over $K$,
	and let $\beta$ be a lift of $\eta(\alpha) \in \Order_{E} / \ideal_{E / K}^{u_{L / K}}$
	to $\Order_{E}$.
	We have $v_{K}(f(\beta)) \ge u_{L / K}$
	by the well-definedness of $\eta$.
	By \cite[the second proposition of \S 1.3]{Fon85},
	we have
		$
				\tilde{\psi}_{L / K}(v_{K}(f(\beta)))
			=
				v_{K}(\beta - \sigma(\alpha))
		$
	for some $\sigma \in \Gal(L / K)$.
	Hence we have
		$
				v_{K}(\beta - \sigma(\alpha))
			\ge
				\tilde{\psi}_{L / K}(u_{L / K})
			=
				i_{L / K}
		$.
	Since
		$
				\Gal(L / M)_{(i)}
			=
				\Gal(L / K)_{(i / e_{M / K})} \cap \Gal(L / M)
		$
	for $i \ge 0$ and $\Gal(L / M) = \Gal(L / K)_{(i_{L / K})}$,
	we have $i_{L / M} = e_{M / K} i_{L / K}$.
	Hence $v_{M}(\beta - \sigma(\alpha)) \ge e_{M / K} i_{L / K} = i_{L / M}$.
	Thus we have $v_{M}(g(\beta)) \ge u_{L / M}$
	by \cite[\loccit]{Fon85},
	where $g$ is the minimal polynomial of $\sigma(\alpha)$ over $M$.
	Since
		$
				\Order_{L}
			=
				\Order_{K}[\sigma(\alpha)]
			=
				\Order_{M}[\sigma(\alpha)]
		$,
	we can define an $\Order_{M}$-algebra homomorphism
	$\Order_{L} \to \Order_{E} / \ideal_{E / K}^{u_{L / M}}$
	by sending $\sigma(\alpha)$ to $\beta$.
	This proves $(\ref{Enum:L/K}) \implies (\ref{Enum:L/M})$.
	
	$(\ref{Enum:L/M}) \implies (\ref{Enum:L/K})$.
	By assumption, there exists an $\Order_{M}$-algebra homomorphism
	$\eta \colon \Order_{L} \to \Order_{E} / \ideal_{E / M}^{u_{L / M}}$.
	Let $\alpha$ be a generator of $\Order_{L}$ as an $\Order_{K}$-algebra,
	let $g$ be the minimal polynomial of $\alpha$ over $M$,
	and let $\beta$ be a lift of $\eta(\alpha) \in \Order_{E} / \ideal_{E / M}^{u_{L / M}}$
	to $\Order_{E}$.
	We have $v_{M}(g(\beta)) \ge u_{L / M}$
	by the well-definedness of $\eta$.
	We have
		$
				\tilde{\psi}_{L / M}(v_{M}(g(\beta)))
			=
				v_{M}(\beta - \sigma(\alpha))
		$
	for some $\sigma \in \Gal(L / M)$ by \cite[\loccit]{Fon85}.
	Hence
		$
				v_{M}(\beta - \sigma(\alpha))
			\ge
				\tilde{\psi}_{L / M}(u_{L / M})
			=
				i_{L / M}
		$.
	Using the equality $i_{L / M} = e_{M / K} i_{L / K}$
	obtained above,
	we have $v_{K}(\beta - \sigma(\alpha)) \ge i_{L / K}$.
	Thus we have $v_{K}(f(\beta)) \ge u_{L / K}$
	by \cite[\loccit]{Fon85},
	where $f$ is the minimal polynomial of $\alpha$ over $K$.
	Thus we can define an $\Order_{K}$-algebra homomorphism
	$\Order_{L} \to \Order_{E} / \ideal_{E / K}^{u_{L / K}}$
	by sending $\alpha$ to $\beta$.
	This proves $(\ref{Enum:L/M}) \implies (\ref{Enum:L/K})$.
\end{proof}


\section{Reduction to totally ramified $E / K$}
\label{Sect:RedTR}
In this section, we reduce the proof of Theorem \ref{Thm:Main}
to the case that the extensions $E / K$
appearing in the definition of \FonP{m} are totally ramified.
To be more precise, we consider the following property \FonPtr{m}:
\begin{itemize}
	\item[\FonPtr{m}]
		If $E$ is a totally ramified algebraic extension of $K$
		and if there exists an $\Order_{K}$-algebra homomorphism
		$\Order_{L} \to \Order_{E} / \ideal_{E / K}^{m}$,
		then $L \subset E$.
\end{itemize}
\begin{Prop}
	\label{Prop:RedTR}
	Let $K$ be a complete discrete valuation field
	with perfect residue field $k$,
	let $L$ be a finite Galois totally ramified extension of $K$
	and let $m$ be a non-negative real number.
	Then $L / K$ satisfies \FonP{m} if and only if
	any finite unramified base change $L' / K'$ of $L / K$ satisfies \FonPtr{m}.
\end{Prop}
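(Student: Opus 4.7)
The plan is to prove the two implications separately. For the $\Rightarrow$ direction, assume $L/K$ satisfies \FonP{m}, and let $L'/K'$ be a finite unramified base change of $L/K$ together with a totally ramified algebraic $E/K'$ and an $\Order_{K'}$-algebra homomorphism $\eta \colon \Order_{L'} \to \Order_{E}/\ideal_{E/K'}^{m}$. Since $K'/K$ is unramified, $v_{K}$ and $v_{K'}$ coincide on $\algcl{K}$, so $\ideal_{E/K}^{m} = \ideal_{E/K'}^{m}$. Restricting $\eta$ along $\Order_{L} \subset \Order_{L'}$ yields an $\Order_{K}$-algebra homomorphism $\Order_{L} \to \Order_{E}/\ideal_{E/K}^{m}$; as $E$ is still algebraic over $K$, \FonP{m} for $L/K$ forces $L \subset E$, and since $K' \subset E$ we conclude $L' = LK' \subset E$.

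For the $\Leftarrow$ direction, assume every finite unramified base change of $L/K$ satisfies \FonPtr{m}, and let $E$ be an algebraic extension of $K$ carrying an $\Order_{K}$-algebra homomorphism $\eta \colon \Order_{L} \to \Order_{E}/\ideal_{E/K}^{m}$. I would argue in two steps. \emph{Step 1 (reduction to finite $E$).} Pick a generator $\alpha$ of $\Order_{L}$ over $\Order_{K}$ and a lift $\beta \in \Order_{E}$ of $\eta(\alpha)$; then $\eta$ factors through the natural injection $\Order_{K(\beta)}/\ideal_{K(\beta)/K}^{m} \into \Order_{E}/\ideal_{E/K}^{m}$, so it suffices to prove $L \subset K(\beta)$. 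Replacing $E$ by $K(\beta)$, we may assume $E/K$ is finite. \emph{Step 2 (apply the hypothesis).} Let $K'$ be the maximal unramified subextension of $E/K$; by finiteness, $K'/K$ is a finite unramified extension, and $E/K'$ is totally ramified. Set $L' = LK'$, a finite unramified base change of $L/K$. Since $L/K$ is totally ramified and Galois while $K'/K$ is unramified, $L$ and $K'$ are linearly disjoint over $K$ and $\Order_{L'} = \Order_{L} \tensor_{\Order_{K}} \Order_{K'}$ (an Eisenstein minimal polynomial of a uniformizer of $L$ remains Eisenstein over $K'$). Combining $\eta$ with the natural map $\Order_{K'} \into \Order_{E} \onto \Order_{E}/\ideal_{E/K'}^{m}$ produces an $\Order_{K'}$-algebra homomorphism $\eta' \colon \Order_{L'} \to \Order_{E}/\ideal_{E/K'}^{m}$, and \FonPtr{m} for $L'/K'$ applied to $E$ and $\eta'$ yields $L' \subset E$, hence $L \subset E$.

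The main obstacle is the reduction to a finite $E$ in Step 1 of the converse: without it, the maximal unramified subextension of $E/K$ could be infinite, so no finite unramified base change $K'$ of $K$ would make $E/K'$ totally ramified, and the hypothesis could not be applied directly. The remaining verifications — the equality $\ideal_{E/K}^{m} = \ideal_{E/K'}^{m}$ for unramified $K'/K$, the tensor-product description $\Order_{L'} = \Order_{L} \tensor_{\Order_{K}} \Order_{K'}$, and the existence of the extension $\eta'$ — are routine consequences of the totally ramified/unramified dichotomy and the fact that a uniformizer of $L$ is also a uniformizer of $L'$.
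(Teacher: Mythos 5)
Your proof is correct and follows essentially the same route as the paper: the forward direction restricts the homomorphism along $\Order_L \subset \Order_{L'}$ (using $\ideal_{E/K}^m=\ideal_{E/K'}^m$), and the converse passes to $K'=E\cap K^{\ur}$ and base-changes $\eta$ to $\Order_{K'}$. Your Step 1, justifying the reduction to finite $E$ via $K(\beta)$, spells out a point the paper leaves implicit ("we may assume $E/K$ is finite"), but it is the same argument.
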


\begin{proof}
	First we assume that any finite unramified base change $L' / K'$ of $L / K$
	satisfies \FonPtr{m}
	and show that $L / K$ satisfies \FonP{m}.
	Suppose $E / K$ is an algebraic extension
	and there exists an $\Order_{K}$-algebra homomorphism
	$\eta \colon \Order_{L} \to \Order_{E} / \ideal_{E / K}^{m}$.
	We may assume $E / K$ is finite.
	Let $K' = E \cap K^{\ur}$ and set $L' = L K'$.
	Let $\eta' \colon \Order_{L'} \to \Order_{E} / \ideal_{E / K'}^{m}$
	be the base change of $\eta$ by $\Order_{K'}$.
	By assumption, we have $L' \subset E$.
	Hence $L \subset E$.
	This shows that $L / K$ satisfies \FonP{m}.
	
	Next we assume that $L / K$ satisfies \FonP{m}
	and show that any finite unramified base change $L' / K'$ of $L / K$
	satisfies \FonPtr{m}.
	Suppose $E / K'$ is a totally ramified algebraic extension
	and there exists an $\Order_{K'}$-algebra homomorphism
	$\eta' \colon \Order_{L'} \to \Order_{E} / \ideal_{E / K'}^{m}$.
	Consider the composite of the inclusion $\Order_{L} \into \Order_{L'}$
	and $\eta'$.
	By assumption, \FonP{m} is true for $L / K$.
	Hence $L \subset E$.
	Thus $L' = L K' \subset E$.
	This shows that $L' / K'$ satisfies \FonPtr{m}.
\end{proof}


\section{Preliminary results on the local class field theory of Serre and Hazewinkel}
\label{Sect:LCFT}
In the paper \cite{SY12},
the authors refined the local class field theory of Serre and Hazewinkel
allowing extensions that are not necessarily totally ramified.
In this section, we recall the notation and results of the cited paper without proofs.
We need this even after the reduction steps made in Sections \ref{Sect:RedAb} and \ref{Sect:RedTR}
since the composite of the totally ramified extensions $L$ and $E$ there
might not be totally ramified.

We recall the notation.
We work on the site $\pfpqc{k}$
of perfect schemes over a perfect field $k$ with the fpqc topology.
The category of sheaves of abelian groups on $\pfpqc{k}$ contains
both the category of commutative affine pro(-quasi)-algebraic groups over $k$
in the sense of Serre (\cite{Ser60})
and the category of commutative \'etale group schemes over $k$
as thick abelian full subcategories.
For $i \ge 0$, we denote by $\Ext_{k}^{i}$ the $i$-th Ext functor
for the category of sheaves of abelian groups on $\pfpqc{k}$.
For a sheaf $A$ of abelian groups on $\pfpqc{k}$ and a non-negative integer $i$,
we define the $i$-th homotopy group $\pi_{i}^{k}(A)$ of $A$
to be the Pontryagin dual of the injective limit of the torsion abelian groups
$\Ext_{k}^{i}(A, n^{-1} \Z / \Z)$ for $n \ge 1$.
The system $\{\pi_{i}^{k}\}_{i \ge 0}$
is a covariant homological functor
from the category of sheaves of abelian groups on $\pfpqc{k}$
to the category of profinite abelian groups.

Let $K$ be a complete discrete valuation field with perfect residue field $k$
and let $\Order_{K}$ be its ring of integers.
We define a sheaf $\alg{O}_{K}$ of rings on $\pfpqc{k}$ as follows.
For each perfect $k$-algebra $R$, we set
$\alg{O}_{K}(R) = W(R) \tensor_{W(k)} \Order_{K}$
if $K$ has mixed characteristic,
and $\alg{O}_{K}(R) = R \mathbin{\hat{\tensor}_{k}} \Order_{K}$
if $K$ has equal characteristic,
where $W$ is the sheaf of the rings of Witt vectors of infinite length
and $\mathbin{\hat{\tensor}}$ denotes the completed tensor product.
Let $\alg{K}$ be the sheaf of rings on $\pfpqc{k}$
with $\alg{K}(R) = \alg{O}_{K}(R)[(1 \tensor \pi_{K})^{-1}]$,
where $\pi_{K}$ is a prime element of $\Order_{K}$.
This is independent of the choice of $\pi_{K}$.
We set $\alg{U}_{K} = \alg{O}_{K}^{\times}$.
For each $n \ge 0$, the sheaf of rings $\alg{O}_{K}$ has
a subsheaf of ideals $\alg{p}_{K}^{n}$ with
	$
			\alg{p}_{K}^{n}(R)
		=
			\alg{O}_{K}(R) \tensor_{\Order_{K}} \maxid_{K}^{n}
	$
for a perfect $k$-algebra $R$.
The presentation
	$
			\alg{O}_{K}
		=
			\projlim_{n \to \infty}
				\alg{O}_{K} / \alg{p}_{K}^{n}
	$
gives an affine proalgebraic ring structure for $\alg{O}_{K}$.
Likewise, $\alg{U}_{K}$ has a subsheaf of groups
$\alg{U}_{K}^{n} = 1 + \alg{p}_{K}^{n}$ for each $n \ge 1$
(for $n = 0$, we set $\alg{U}_{K}^{0} = \alg{U}_{K}$).
The presentation
	$
			\alg{U}_{K}
		=
			\projlim_{n \to \infty}
				\alg{U}_{K} / \alg{U}_{K}^{n}
	$
gives an affine proalgebraic group structure for $\alg{U}_{K}$.
We have a split exact sequence
$0 \to \alg{U}_{K} \to \alg{K}^{\times} \to \Z \to 0$.
The same constructions are applied to
any finite totally ramified extension $E$ of $K$,
which yields sheaves $\alg{U}_{E}$, $\alg{U}_{E}^{n}$ and $\alg{E}^{\times}$ on $\pfpqc{k}$.

Theorem \ref{Thm:LCFT} and Propositions \ref{Prop:SeqFinExt}-\ref{Prop:CokNorm} below
correspond to Theorem 1.1 and Propositions 4.3-4.5 of \cite{SY12} respectively.
We state slightly restricted versions.
These are sufficient and suitable for this paper.
See \cite{SY12} for the proofs.

\begin{Thm}
	\label{Thm:LCFT}
	There exists a canonical isomorphism
	$\pi_{1}^{k}(\alg{K}^{\times}) \isomto G_{K}^{\ab}$
	and a commutative diagram with exact rows
		\begin{equation}
			\label{Eq:LCFT}
			\begin{CD}
					0
				@>>>
					\pi_{1}^{k}(\alg{U}_{K})
				@>>>
					\pi_{1}^{k}(\alg{K}^{\times})
				@>>>
					\pi_{1}^{k}(\Z)
				@>>>
					0
				\\
				@.
				@VV \wr V
				@VV \wr V
				@VV \wr V
				@.
				\\
					0
				@>>>
					T(K^{\ab} / K)
				@>>>
					G_{K}^{\ab}
				@>>>
					G_{k}^{\ab}
				@>>>
					0.
			\end{CD}
		\end{equation}
	Here $G_{K}^{\ab}$ (resp.\ $G_{k}^{\ab}$) is the Galois group
	of the maximal abelian extension $K^{\ab} / K$ (resp.\ $k^{\ab} / k$)
	and $T$ denotes the inertia group.
	The left vertical isomorphism is the reciprocity map
	of the local class field theory of Hazewinkel
	(\cite[Appendice, \S 7.3]{DG70}).
	The right vertical isomorphism is dual to the isomorphism
	$H^{1}(G_{k}, \Q / \Z) \cong \Ext_{k}^{1}(\Z, \Q / \Z)$ times $-1$.
\end{Thm}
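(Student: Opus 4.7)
The plan is to apply the homological functor $\{\pi_{i}^{k}\}_{i \ge 0}$ to the split short exact sequence $0 \to \alg{U}_{K} \to \alg{K}^{\times} \to \Z \to 0$ of sheaves on $\pfpqc{k}$, to identify the two flanking terms of the resulting long exact sequence with the classical objects $T(K^{\ab}/K)$ and $G_{k}^{\ab}$ separately, and then to glue these identifications by the five lemma to extract the canonical middle isomorphism $\pi_{1}^{k}(\alg{K}^{\times}) \isomto G_{K}^{\ab}$.

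First I would verify that the top row is already short exact, i.e.\ that $\pi_{0}^{k}(\alg{U}_{K}) = 0$. Writing $\alg{U}_{K} = \projlim_{n} \alg{U}_{K}/\alg{U}_{K}^{n}$ as a pro-system of connected affine algebraic groups over $k$ and noting that each $n^{-1}\Z/\Z$ is étale, the groups $\Hom_{k}(\alg{U}_{K}/\alg{U}_{K}^{n}, n^{-1}\Z/\Z)$ vanish and hence so does their colimit; alternatively, a choice of prime element splits the sequence and one handles the two summands independently. Next I would construct the right vertical $\pi_{1}^{k}(\Z) \isomto G_{k}^{\ab}$: for the constant sheaf $\Z$, the group $\Ext_{k}^{1}(\Z, n^{-1}\Z/\Z)$ is the fpqc cohomology $H^{1}_{\mathrm{fpqc}}(\mathrm{Spec}\,k, n^{-1}\Z/\Z)$, and since the coefficient is étale and $k$ is perfect this agrees with the Galois cohomology $H^{1}(G_{k}, n^{-1}\Z/\Z)$; taking the colimit and Pontryagin dualizing (with the sign convention indicated in the statement) produces $G_{k}^{\ab}$.

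The hardest step is the left vertical $\pi_{1}^{k}(\alg{U}_{K}) \isomto T(K^{\ab}/K)$, which amounts to recasting the reciprocity map of Serre and Hazewinkel in the $\Ext^{1}$-language used here. The plan is to show that extensions of $\alg{U}_{K}$ by $n^{-1}\Z/\Z$, classified by $\Ext_{k}^{1}(\alg{U}_{K}, n^{-1}\Z/\Z)$, correspond to finite cyclic totally ramified abelian extensions of $K$ via isogenies of the proalgebraic group $\alg{U}_{K}$ constructed through Lubin--Tate, Kummer, or Artin--Schreier--Witt covers on the graded pieces $\alg{U}_{K}^{n}/\alg{U}_{K}^{n+1}$; taking the colimit in $n$ and dualizing exhibits $\pi_{1}^{k}(\alg{U}_{K})$ as $T(K^{\ab}/K)$. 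The delicate point is verifying that Serre's original geometric definition of the fundamental group of a proalgebraic group (via universal covers) agrees with the $\Ext^{1}$-based definition adopted here, and that this identification is compatible with the norm residue symbol; this is essentially where all the substantive content of classical local class field theory is consumed.

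Finally, the middle isomorphism is defined by gluing: the five lemma applied to the two short exact sequences and the two flanking isomorphisms produces a canonical map $\pi_{1}^{k}(\alg{K}^{\times}) \isomto G_{K}^{\ab}$. Independence of the chosen prime element follows from the naturality of $\pi_{1}^{k}$: two splittings of $0 \to \alg{U}_{K} \to \alg{K}^{\times} \to \Z \to 0$ differ by a homomorphism $\Z \to \alg{U}_{K}$ whose effect on $\pi_{1}^{k}$ vanishes by the connectedness argument used for $\pi_{0}^{k}(\alg{U}_{K})$. Commutativity of the two squares in the diagram is then tautological on the left (by construction of the Hazewinkel reciprocity map) and formal on the right (induced map on quotients).
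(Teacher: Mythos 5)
First, note that the paper does not prove Theorem \ref{Thm:LCFT} at all: Section \ref{Sect:LCFT} states explicitly that this theorem is (a restricted version of) Theorem 1.1 of \cite{SY12} and refers there for the proof. So there is no in-paper argument to compare with; what can be judged is whether your outline would itself constitute a proof, and as written it would not. The preparatory steps are fine: the splitting of $0 \to \alg{U}_{K} \to \alg{K}^{\times} \to \Z \to 0$ does give the exact top row, the identification $\pi_{1}^{k}(\Z) \cong G_{k}^{\ab}$ via $\Ext_{k}^{1}(\Z, \Q/\Z) \cong H^{1}(G_{k}, \Q/\Z)$ is exactly the one named in the statement, and $\pi_{1}^{k}(\alg{U}_{K}) \cong T(K^{\ab}/K)$ is Hazewinkel's reciprocity isomorphism, which the theorem takes as an input rather than reproves.

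The gap is in your final step. The five lemma does not \emph{produce} a middle map from the two flanking isomorphisms; it only certifies that an already-given middle map in a commutative ladder is an isomorphism. To construct $\pi_{1}^{k}(\alg{K}^{\times}) \isomto G_{K}^{\ab}$ by gluing you must choose compatible splittings of both rows --- a prime element $\pi_{K}$ splits the top row, and a corresponding maximal abelian totally ramified extension splits the bottom one --- and then prove independence of $\pi_{K}$. Your independence argument is incorrect: two splittings differ by a homomorphism $h \colon \Z \to \alg{U}_{K}$ given by a unit $u$, and the induced map $\pi_{1}^{k}(h) \colon G_{k}^{\ab} \to T(K^{\ab}/K)$ is in general nonzero. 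Connectedness of $\alg{U}_{K}$ gives $\pi_{0}^{k}(\alg{U}_{K}) = 0$, but says nothing about maps \emph{into} $\pi_{1}^{k}(\alg{U}_{K})$; dually, pulling back a nontrivial extension of $\alg{U}_{K}$ by $n^{-1}\Z/\Z$ along the point $u$ yields a torsor over $k$ that need not be trivial unless $k$ is algebraically closed (for $k$ finite this map is essentially $1 \mapsto u$ under $G_{k}^{\ab} \cong \hat{\Z}$ and $T(K^{\ab}/K) \cong U_{K}$). Obtaining a description of the whole group $G_{K}^{\ab}$ that is canonical --- rather than only of its inertia subgroup, which is all Hazewinkel's theory gives --- is precisely the refinement that the introduction says required the separate paper \cite{SY12}; it is the substantive content of the theorem, not a formality to be disposed of by naturality.
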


\begin{Prop}
	\label{Prop:SeqFinExt}
	Let $E / K$ be a finite totally ramified extension
	and let $N_{E / K} \colon \alg{E}^{\times} \to \alg{K}^{\times}$
	be the norm map.
	Then we have
		$
				\pi_{0}^{k}(\Ker(N_{E / K}))
			\cong
				\Gal(E \cap K^{\mathrm{ab}} / K)
		$.
\end{Prop}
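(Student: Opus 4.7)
The plan is to apply the long exact sequence for the homotopy functors $\pi_i^k$ to a suitable short exact sequence of sheaves obtained from the norm map, then identify the resulting terms using Theorem~\ref{Thm:LCFT} and the naturality of reciprocity under norms.

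First I would observe that since $E/K$ is totally ramified the residue degree is one, and hence $v_K \compose N_{E/K} = v_E$ on $\alg{E}^{\times}$. Thus the norm induces the identity on the quotients $\Z$ in the split exact sequences $0 \to \alg{U}_{E} \to \alg{E}^{\times} \to \Z \to 0$ and the analogue for $K$. This forces
$\Ker(N_{E/K}\colon \alg{E}^{\times} \to \alg{K}^{\times}) = \Ker(N_{E/K}\colon \alg{U}_{E} \to \alg{U}_{K})$,
reducing the problem to the kernel of the norm on unit sheaves. The next key technical point is that $N_{E/K}\colon \alg{U}_{E} \to \alg{U}_{K}$ is surjective as a morphism of fpqc sheaves; I would check this after passing to $\algcl{k}$-points, where it reduces to the classical surjectivity of the norm on units of a totally ramified extension of complete discrete valuation fields with separably closed residue field, combined with smoothness of the finite-level quotients $\alg{U}_{K}/\alg{U}_{K}^{n}$.

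Granting surjectivity, the short exact sequence $0 \to \Ker(N_{E/K}) \to \alg{U}_{E} \to \alg{U}_{K} \to 0$ yields a long exact sequence
$$\pi_{1}^{k}(\alg{U}_{E}) \to \pi_{1}^{k}(\alg{U}_{K}) \to \pi_{0}^{k}(\Ker(N_{E/K})) \to \pi_{0}^{k}(\alg{U}_{E}) \to \pi_{0}^{k}(\alg{U}_{K}).$$
The sheaves $\alg{U}_{E}$ and $\alg{U}_{K}$ are filtered by successive extensions coming from the graded pieces $\alg{U}^{n}/\alg{U}^{n+1}$, which are $\alg{G}_m$ or $\alg{G}_a$; hence they are connected pro-algebraic groups and admit no nonzero homomorphisms into the \'etale sheaf $\Q/\Z$, so $\pi_{0}^{k}(\alg{U}_{E}) = \pi_{0}^{k}(\alg{U}_{K}) = 0$. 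Thus $\pi_{0}^{k}(\Ker(N_{E/K}))$ equals the cokernel of $\pi_{1}^{k}(\alg{U}_{E}) \to \pi_{1}^{k}(\alg{U}_{K})$.

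Finally, apply Theorem~\ref{Thm:LCFT} to both $E$ and $K$, which share the residue field $k$, and invoke the naturality of reciprocity: the norm on pro-algebraic $\alg{E}^{\times} \to \alg{K}^{\times}$ induces the homomorphism $G_{E}^{\ab} \to G_{K}^{\ab}$ coming from the inclusion $G_{E} \subset G_{K}$. Since the induced map on the $\Z$-quotients is the identity, so is the map $\pi_{1}^{k}(\Z) \to \pi_{1}^{k}(\Z)$, and the snake lemma applied to the two copies of the exact diagram in (\ref{Eq:LCFT}) gives
$$\Coker\bigl(\pi_{1}^{k}(\alg{U}_{E}) \to \pi_{1}^{k}(\alg{U}_{K})\bigr) = \Coker\bigl(G_{E}^{\ab} \to G_{K}^{\ab}\bigr).$$
The image of $G_{E}$ in $G_{K}^{\ab}$ has fixed field $E \cap K^{\ab}$, so the right-hand side is $\Gal(E \cap K^{\ab}/K)$, yielding the claim. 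The main obstacles I anticipate are the sheaf-surjectivity of the norm on unit sheaves and the compatibility of the pro-algebraic reciprocity map with $N_{E/K}$; both are foundational facts about the pro-algebraic framework that I would import from \cite{SY12} rather than reprove.
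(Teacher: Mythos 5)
The paper does not prove this proposition at all: it is stated as a restricted version of \cite[Prop.\ 4.3]{SY12} and the proof is explicitly deferred to that reference, so there is no in-paper argument to compare against. Judged on its own terms, your derivation is structurally sound: the reduction of $\Ker(N_{E/K})$ on $\alg{E}^{\times}$ to the kernel on $\alg{U}_{E}$ via $v_{K}\compose N_{E/K}=v_{E}$ is correct for totally ramified extensions; the vanishing of $\pi_{0}^{k}$ of the connected groups $\alg{U}_{E}$, $\alg{U}_{K}$ is right; the long exact sequence and the snake-lemma identification of $\Coker(\pi_{1}^{k}(\alg{U}_{E})\to\pi_{1}^{k}(\alg{U}_{K}))$ with $\Coker(G_{E}^{\ab}\to G_{K}^{\ab})=\Gal(E\cap K^{\ab}/K)$ are correct. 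Be aware, though, that the two facts you ``import'' are not minor technicalities but essentially the whole content being cited: (i) surjectivity of $N_{E/K}\colon\alg{U}_{E}\to\alg{U}_{K}$ as a morphism of pro-algebraic groups needs Serre's criterion that surjectivity of strict morphisms of pro-algebraic groups can be tested on $\algcl{k}$-points (smoothness of finite-level quotients alone does not handle the inverse limit), plus Serre's norm surjectivity over $\algcl{k}$; and (ii) the compatibility of the isomorphism $\pi_{1}^{k}(\alg{E}^{\times})\isomto G_{E}^{\ab}$ with norms is the norm-functoriality of the reciprocity map, which in the Serre--Hazewinkel framework is proved alongside (and is nearly equivalent to) the statement $\pi_{0}^{k}(\Ker N_{E/K})\cong\Gal(E\cap K^{\ab}/K)$ itself. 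So your argument is best viewed as a clean reduction of the proposition to Theorem \ref{Thm:LCFT} together with these two foundational inputs from \cite{SY12} --- acceptable here, since the paper treats the proposition the same way, but one should not mistake it for an independent proof. A final small caveat: the proposition allows arbitrary finite totally ramified $E/K$, and in equal characteristic such $E$ may be inseparable over $K$; your identification of $\Coker(G_{E}^{\ab}\to G_{K}^{\ab})$ and the surjectivity argument should be checked (or the statement reduced to the separable part $E\cap K^{\sep}$) in that case.
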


For the next two propositions,
let $L / K$ be a finite wild Galois extension
with a unique jump in the ramification filtration of its Galois group.
Then $L / K$ is a totally ramified abelian extension
whose Galois group is killed by $p$.
Set $m = u_{L / K} > 1$
and $G = \Gal(L / K)$.
Then $m$ is an integer by the Hasse-Arf theorem.
We write $\psi = \psi_{L / K}$.

\begin{Prop}
	\label{Prop:SeqRamFil}
	Let $N_{L / K} \colon \alg{U}_{L} \to \alg{U}_{K}$ be the norm map
	and
		\[
				\overline{N_{L / K}}
			\colon
				\alg{U}_{L} / \alg{U}_{L}^{\psi(m - 1) + 1}
			\to
				\alg{U}_{K} / \alg{U}_{K}^{m}
		\]
	be its quotient.
	Then we have
	$\pi_{0}^{k}(\overline{N_{L / K}}) \cong G$.
\end{Prop}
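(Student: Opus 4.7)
The plan is to derive Proposition \ref{Prop:SeqRamFil} from Proposition \ref{Prop:SeqFinExt}---which gives $\pi_{0}^{k}(\Ker N_{L/K}) \cong \Gal(L \cap K^{\ab}/K) = G$, since $L/K$ is abelian---by a snake-lemma comparison between the full norm and its truncation. Concretely, I would consider the commutative diagram of fpqc sheaves on $\pfpqc{k}$ with exact rows
\[
\begin{CD}
0 @>>> \alg{U}_{L}^{\psi(m-1)+1} @>>> \alg{U}_{L} @>>> \alg{U}_{L}/\alg{U}_{L}^{\psi(m-1)+1} @>>> 0 \\
@. @VV{f_{1}}V @VV{f_{2}}V @VV{f_{3}}V @. \\
0 @>>> \alg{U}_{K}^{m} @>>> \alg{U}_{K} @>>> \alg{U}_{K}/\alg{U}_{K}^{m} @>>> 0,
\end{CD}
\]
in which $f_{2} = N_{L/K}$ and $f_{3} = \overline{N_{L/K}}$. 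The vertical map $f_{1}$ is well defined thanks to the single-jump hypothesis combined with Hasse--Arf: on $[0, m]$ we have $\psi(u) = e_{L/K}\, u$, so $\varphi_{L/K}(\psi(m-1)+1) = m - 1 + 1/e_{L/K}$, whose ceiling is $m$, and Serre's inclusion $N_{L/K}(\alg{U}_{L}^{i}) \subset \alg{U}_{K}^{\lceil\varphi(i)\rceil}$ provides the required containment.

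The snake lemma then yields
\[
0 \to \Ker f_{1} \to \Ker f_{2} \to \Ker f_{3} \to \Coker f_{1} \to \Coker f_{2} \to \Coker f_{3} \to 0,
\]
to which I would apply the covariant homological functor $\pi_{*}^{k}$. By Proposition \ref{Prop:SeqFinExt}, $\pi_{0}^{k}(\Ker f_{2}) \cong G$, while $\pi_{0}^{k}$ vanishes on the connected smooth algebraic groups $\alg{U}_{L}/\alg{U}_{L}^{\psi(m-1)+1}$ and $\alg{U}_{K}/\alg{U}_{K}^{m}$. The plan is to track $G$ through the boundary maps so as to identify $\pi_{0}^{k}(\overline{N_{L/K}})$ with it. A first key auxiliary computation is for $\Coker f_{1}$: since $G^{u} = 1$ for $u > m$, the graded-piece analysis of Serre (\emph{Local Fields}, V.\S6) gives $N_{L/K}(\alg{U}_{L}^{\psi(m)+1}) = \alg{U}_{K}^{m+1}$, and because $\alg{U}_{L}^{\psi(m-1)+1} \supset \alg{U}_{L}^{\psi(m)+1}$, it follows that $\Coker f_{1}$ is a quotient of $\alg{U}_{K}^{m}/\alg{U}_{K}^{m+1} \cong \mathbb{G}_{a}$, hence connected with $\pi_{0}^{k}(\Coker f_{1}) = 0$.

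The main obstacle is the control of $\Ker f_{1} = \Ker(N_{L/K}|_{\alg{U}_{L}^{\psi(m-1)+1}})$: one has to verify $\pi_{0}^{k}(\Ker f_{1}) = 0$, so that the inclusion $\Ker f_{1} \hookrightarrow \Ker f_{2}$ factors through the identity component of $\Ker N_{L/K}$. Equivalently, each of the $|G|$ nontrivial connected components of $\Ker N_{L/K}$ must already be representable by a unit of $\alg{U}_{L}$ living strictly below the truncation level $\psi(m-1)+1$. This is where the single-jump hypothesis enters in full strength: all ramification is concentrated at the break $\psi(m) = e_{L/K}\, m$, the Galois-theoretic data attached to $G$ sits in the graded piece $\alg{U}_{L}^{\psi(m-1)}/\alg{U}_{L}^{\psi(m-1)+1}$ just below the jump, and the truncation $\psi(m-1)+1$ is chosen precisely to retain that essential piece while discarding the higher, unramified-like levels. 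The verification reduces to explicit Serre-style computations of $N_{L/K}$ on each graded piece $\alg{U}_{L}^{i}/\alg{U}_{L}^{i+1}$ for $i \geq \psi(m-1)+1$, and I expect the compatibility of these graded identifications with the global $\pi_{0}^{k}$-computation to be the technical heart of the argument.
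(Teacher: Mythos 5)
First, a caveat about the comparison itself: the paper contains no proof of Proposition \ref{Prop:SeqRamFil} --- it is one of the statements quoted from \cite{SY12} (``See \cite{SY12} for the proofs''), so there is no in-paper argument to measure yours against. Judged on its own terms, your overall strategy --- compare $N_{L/K}$ with $\overline{N_{L/K}}$ via the restriction $f_{1}\colon \alg{U}_{L}^{\psi(m-1)+1}\to\alg{U}_{K}^{m}$, apply the snake lemma, and feed in Proposition \ref{Prop:SeqFinExt} (which does give $\pi_{0}^{k}(\Ker N_{L/K})\cong G$ here, since $L/K$ is totally ramified abelian, so $\Ker(N\colon\alg{L}^{\times}\to\alg{K}^{\times})=\Ker(N\colon\alg{U}_{L}\to\alg{U}_{K})$ and $L\cap K^{\ab}=L$) --- is sound and is very plausibly close to what \cite{SY12} does. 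But as written your text is a plan rather than a proof, and the step you defer is exactly the one that carries the content.

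Concretely: (1) You state that $\pi_{0}^{k}(\Ker f_{1})=0$ is ``the main obstacle'' and ``the technical heart'' and leave it as an expectation. Without it the snake lemma only yields surjectivity of $\pi_{0}^{k}(\Ker N_{L/K})\onto\pi_{0}^{k}(\Ker\overline{N_{L/K}})$, not the isomorphism. This is precisely where the single-jump hypothesis must be used, and it must be carried out: the unique break forces the lower and upper breaks to coincide at $t=m-1$, so $\psi(m-1)=m-1$ (your formula $\psi(u)=e_{L/K}u$ on $[0,m]$ is false --- it would force $G^{u}=1$ there --- although your value $\varphi_{L/K}(\psi(m-1)+1)=m-1+1/e_{L/K}$ is coincidentally correct), and the truncation level $m=t+1$ sits just above the break. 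For levels $n\ge t+1$ the graded norm maps are, on perfections, either zero or of the form $x\mapsto cx^{p^{j}}$ with $c\ne 0$, so each graded piece of $\Ker f_{1}$ is $0$ or $\mathbb{G}_{a}$ and $\Ker f_{1}$ is connected as an inverse limit of iterated extensions of connected groups; the one Artin--Schreier-type graded map, whose \'etale kernel $\Z/p$ accounts for all of $\pi_{0}^{k}(\Ker N_{L/K})$, occurs exactly at level $t$ and is excluded by the truncation. (2) Your treatment of $\Coker f_{1}$ is insufficient: knowing only that $\Coker f_{1}$ is connected does not neutralize the snake-lemma boundary, because the image of $\Ker f_{3}$ in $\Coker f_{1}$ is a subsheaf of a connected group (such subsheaves can be disconnected, e.g.\ $\Z/p\subset\mathbb{G}_{a}$), and one would still have to kill the connecting map $\pi_{1}^{k}\to\pi_{0}^{k}$. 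What you should prove, and what is true, is $\Coker f_{1}=0$: the graded norm maps $\alg{U}_{L}^{\psi(v)}/\alg{U}_{L}^{\psi(v)+1}\to\alg{U}_{K}^{v}/\alg{U}_{K}^{v+1}$ are surjective for all $v\ge m>t$, whence $N_{L/K}(\alg{U}_{L}^{m})\supseteq N_{L/K}(\alg{U}_{L}^{\psi(m)})=\alg{U}_{K}^{m}$. With these two points supplied, your argument closes.
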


\begin{Prop}
	\label{Prop:CokNorm}
	We have canonical isomorphisms
		\[
				K^{\times} / N_{L / K} L^{\times}
			\cong
				U_{K}^{m - 1} / U_{K}^{m} N_{L / K} U_{L}^{\psi(m - 1)}
			\cong
				\Hom(G_{k}, G).
		\]
	Moreover, if $k'$ is a finite extension of $k$
	and $K'$ (resp.\ $L'$) is the corresponding
	unramified base change of $K$ (resp.\ $L$),
	then the following diagram commutes:
		\[
			\begin{CD}
					K^{\times} / N_{L / K} L^{\times}
				@=
					U_{K}^{m - 1} / U_{K}^{m} N_{L / K} U_{L}^{\psi(m - 1)}
				@=
					\Hom(G_{k}, G)
				\\
				@VVV
				@VVV
				@VVV
				\\
					K'^{\times} / N_{L' / K'} L'^{\times}
				@=
					U_{K'}^{m - 1} / U_{K'}^{m} N_{L' / K'} U_{L'}^{\psi(m - 1)}
				@=
					\Hom(G_{k'}, G),
			\end{CD}
		\]
	where the vertical arrows
	are induced by the inclusions $K^{\times} \into K'^{\times}$,
	$U_{K}^{m - 1} \into U_{K'}^{m - 1}$ and
	$G_{k'} \into G_{k}$.
\end{Prop}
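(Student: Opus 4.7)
The plan is to establish the two isomorphisms separately, and then deduce base change compatibility from naturality.

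For the first isomorphism $K^{\times} / N_{L/K} L^{\times} \cong U_K^{m-1}/U_K^m \cdot N_{L/K}(U_L^{\psi(m-1)})$, only classical local ramification theory is needed. Since $L/K$ is totally ramified, $N_{L/K}(\pi_L)$ is a uniformizer of $K$ up to a unit, giving $K^{\times} / N_{L/K}L^{\times} \cong U_K / N_{L/K} U_L$. The unique upper break at $m$ forces $\psi_{L/K}$ to have slope $1$ on $[0, m]$, so $\psi(m-1) = m-1$. Serre's analysis of the norm on the graded pieces \cite[Ch.~V, \S 6]{Ser79} then shows that the induced maps $U_L^n/U_L^{n+1} \to U_K^n/U_K^{n+1}$ are isomorphisms for $n < m-1$ and surjective for $n > m-1$, so that all the defect in $U_K / N_{L/K} U_L$ is concentrated at level $n = m-1$. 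A descending filtration argument then gives the required identification.

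For the second isomorphism $U_K^{m-1}/U_K^m \cdot N_{L/K}(U_L^{\psi(m-1)}) \cong \Hom(G_k, G)$, the key input is Proposition \ref{Prop:SeqRamFil}. The source and target of $\overline{N_{L/K}}$ are commutative affine quasi-algebraic groups with connected identity components, and Proposition \ref{Prop:SeqRamFil} asserts that the component group $\pi_0^k$ of the sheaf cokernel of $\overline{N_{L/K}}$ equals the constant étale $k$-group scheme $G$ (with trivial $G_k$-action). The group we want is the cokernel of $\overline{N_{L/K}}$ on $k$-rational sections, which is related to the $k$-sections of the sheaf cokernel by the Galois cohomology long exact sequence attached to the decomposition of the sheaf cokernel into its identity component and its $G$-valued component group. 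An Artin--Schreier style computation on the critical graded piece --- where the norm $U_L^{\psi(m-1)}/U_L^{\psi(m-1)+1} \to U_K^{m-1}/U_K^m$ acts as an isogeny of $\mathbb{G}_a$-like groups with constant kernel $G$ --- shows that the obstruction coming from $H^1$ of the identity component is exactly $H^1(k, G) = \Hom(G_k, G)$.

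Base change compatibility is then automatic: the norm maps, the Herbrand function $\psi_{L/K}$ (preserved under unramified base change), the unit filtrations, and the sheaf-theoretic formalism of \cite{SY12} are all natural in $k$, and the rightmost vertical map in the diagram corresponds to restriction along the inclusion $G_{k'} \hookrightarrow G_k$. The main obstacle is the second isomorphism, where one must carefully bridge between the sheaf-level $\pi_0^k$-statement of Proposition \ref{Prop:SeqRamFil} and the cokernel on $k$-rational sections via Artin--Schreier cohomology; this is where the single-break hypothesis and the $p$-torsion of $G$ are crucially used.
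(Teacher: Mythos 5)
The paper itself does not prove this proposition: it is imported from \cite{SY12} (Proposition 4.5 there), and the text explicitly says ``See \cite{SY12} for the proofs.'' So your sketch can only be measured against the surrounding formalism. Its overall strategy --- classical graded-piece analysis of the norm to reduce $K^{\times}/N_{L/K}L^{\times}$ to the critical level $m-1$ (your observation that the single jump forces $\psi(m-1)=m-1$ is correct), followed by the Serre--Hazewinkel formalism to identify the resulting cokernel on rational points with $\Hom(G_{k},G)$ --- is the natural one and is consistent with how Propositions \ref{Prop:SeqRamFil} and \ref{Prop:CokNorm} are actually used in Section \ref{Sect:RedCase}.

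However, the key middle step is misstated in a way that, taken literally, breaks the argument. Proposition \ref{Prop:SeqRamFil} is a statement about $\pi_{0}^{k}$ of the \emph{kernel} of $\overline{N_{L/K}}$, not of its cokernel: as the exact bottom row in the proof of Proposition \ref{Prop:RedMainTh}(\ref{Ass:If}) shows, $\overline{N_{L/K}}$ is surjective as a map of sheaves on $\pfpqc{k}$ (each graded piece of the norm is surjective on the perfect site, the critical one being a separable additive isogeny with finite \'etale kernel), so the sheaf cokernel is zero and its component group cannot be $G$. The correct mechanism, which you gesture at but garble, is: from the exact sequence $0 \to \Ker(\overline{N_{L/K}}) \to \alg{U}_{L}/\alg{U}_{L}^{\psi(m-1)+1} \to \alg{U}_{K}/\alg{U}_{K}^{m} \to 0$ and the vanishing of $H^{1}$ over the perfect field $k$ for the connected groups involved, the cokernel on $k$-rational sections is $H^{1}(k,\Ker(\overline{N_{L/K}})) \cong H^{1}(k,\pi_{0}(\Ker)) \cong H^{1}(k,G) = \Hom(G_{k},G)$. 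In particular the obstruction lives in $H^{1}$ of the \emph{component group} (identified with the constant group $G$ by Proposition \ref{Prop:SeqRamFil}), not in ``$H^{1}$ of the identity component'' as you write --- the latter vanishes. With that correction the argument goes through; the remaining slips (for $n>m-1$ the graded norm map has source $U_{L}^{\psi(n)}/U_{L}^{\psi(n)+1}$, not $U_{L}^{n}/U_{L}^{n+1}$) are cosmetic, and the base-change compatibility does reduce to naturality once one knows that $\pi_{0}(\Ker)$ is the constant group $G$ compatibly over $k$ and $k'$, which requires Proposition \ref{Prop:SeqRamFil} rather than pure formalism.
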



\section{The reduced case}
\label{Sect:RedCase}
Throughout this section,
let $K$ be a complete discrete valuation field
with perfect residue field $k$ of characteristic $p > 0$
and let $L$ be a finite wild Galois extension of $K$
with a unique jump in the ramification filtration of its Galois group.
Then the extension $L / K$ is a totally ramified abelian extension
whose Galois group is killed by $p$.
We put $G := \Gal(L / K)$ and $m := u_{L / K} > 1$.
By the Hasse-Arf theorem, $m$ is an integer.

The following proposition, combined with
Propositions \ref{Prop:RedAb} and \ref{Prop:RedTR},
proves Theorem \ref{Thm:Main}.

\begin{Prop}
	\label{Prop:RedMainTh}
	Let $L / K$, $G$, $m$ be as above.
	\begin{enumerate}
		\item
			\label{Ass:OnlyIf}
			If $L / K$ satisfies \FonPtr{m},
			then $k$ has no Galois extension of degree $p$.
		\item
			\label{Ass:If}
			If $E$ is a totally ramified algebraic extension of $K$
			and if there exists an $\Order_{K}$-algebra homomorphism
			$\Order_{L} \to \Order_{E} / \ideal_{E / K}^{m}$,
			then there exists an unramified Galois extension $K' / K$
			with Galois group isomorphic to a subgroup of $G$
			such that $L \subset E K'$.
	\end{enumerate}
\end{Prop}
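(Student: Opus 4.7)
The plan is to extend the sketch given in the introduction for abelian $L / K$ over algebraically closed $k$ to our setting, systematically replacing the classical Serre--Hazewinkel formalism by the refinement from \cite{SY12} recalled in Section \ref{Sect:LCFT}. I will prove part (\ref{Ass:If}) directly by manufacturing from $\eta$ a norm map between sheaf-theoretic unit groups and matching it against the canonical norm squares in Theorem \ref{Thm:LCFT} and Propositions \ref{Prop:SeqFinExt}--\ref{Prop:SeqRamFil}; I will prove part (\ref{Ass:OnlyIf}) contrapositively, using Proposition \ref{Prop:CokNorm} to produce an explicit counterexample.

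For part (\ref{Ass:If}), I may reduce to $E / K$ finite by factoring $\eta$ through a finite subextension. Since $m$ is an integer, $\ideal_{L / K}^{m} = \pi_{K}^{m} \Order_{L}$, so $\eta$ descends automatically to a local $\Order_{K}$-algebra map $\bar\eta \colon \Order_{L} / \ideal_{L / K}^{m} \to \Order_{E} / \ideal_{E / K}^{m}$. Lifting $\bar\eta(\pi_{L})$ to $\beta \in \Order_{E}$ and invoking Fontaine's distance estimate (the same one used in Section \ref{Sect:RedAb}) forces $v_{K}(\beta) = 1 / e_{L / K}$, hence $e_{L / K} \mid e_{E / K}$ and $\Order_{E} / \ideal_{E / K}^{m}$ is free of rank $e_{E / K} / e_{L / K}$ over $\Order_{L} / \ideal_{L / K}^{m}$. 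The module-theoretic determinant then sheafifies to a morphism $N_{\eta} \colon \alg{U}_{E} / \alg{U}_{E}^{m e_{E / K}} \to \alg{U}_{L} / \alg{U}_{L}^{m e_{L / K}}$ of affine proalgebraic groups over $k$, commuting with the natural norms into $\alg{U}_{K} / \alg{U}_{K}^{m}$. Using the relations between Serre's and Fontaine's Herbrand functions recorded in the Notation one checks $\psi(m - 1) + 1 = m$ in our single-break case, so projecting down to $\alg{U}_{L} / \alg{U}_{L}^{m}$ I land in the norm square of Proposition \ref{Prop:SeqRamFil}.

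Incorporating $\alg{E}^{\times}$ and $\alg{K}^{\times}$ into the diagram and applying the functors $\pi_{0}^{k}$ and $\pi_{1}^{k}$, I feed the $E$-side into Proposition \ref{Prop:SeqFinExt}, the $L$-side into Proposition \ref{Prop:SeqRamFil}, and the bottom into Theorem \ref{Thm:LCFT}. The resulting commutative diagram of profinite abelian groups shows that the restriction map $G_{K}^{\ab} \onto G$ classifying $L / K$ factors through $G_{K}^{\ab} \onto \Gal((E \cap K^{\ab}) K' / K)$ for some finite unramified $K' / K$ with $\Gal(K' / K) \into G$; this factorization is precisely the statement $L \subset E K'$. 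The extension $K' / K$ is automatically Galois, since $\Gal(K' / K)$ embeds into the abelian group $G$.

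For part (\ref{Ass:OnlyIf}), I argue contrapositively. Suppose $k$ admits a Galois extension of degree $p$. Since $G$ is a non-trivial elementary abelian $p$-group (by the single-break hypothesis and Hasse--Arf), it has a subgroup isomorphic to $\Z / p$, so composing the surjection $G_{k} \onto \Z / p$ with this inclusion yields a non-trivial element of $\Hom(G_{k}, G)$. Proposition \ref{Prop:CokNorm} then produces $u \in U_{K}^{m - 1}$ whose class is non-trivial in $U_{K}^{m - 1} / U_{K}^{m} N_{L / K} U_{L}^{\psi(m - 1)}$. I would use $u$ to deform the Eisenstein polynomial $f \in \Order_{K}[X]$ of a uniformizer $\pi_{L}$ of $L$ into an Eisenstein polynomial $\tilde f \equiv f \pmod{\pi_{K}^{m}}$ that has no root in $L$; letting $E := K(\tilde\pi)$ for a root $\tilde\pi$ of $\tilde f$ then produces a totally ramified $E / K$ with $L \not\subset E$ together with a well-posed $\Order_{K}$-algebra homomorphism $\Order_{L} \to \Order_{E} / \ideal_{E / K}^{m}$ sending $\pi_{L}$ to $\tilde\pi$, contradicting \FonPtr{m}. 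The principal obstacle is the endgame in part (\ref{Ass:If}): the classical Hazewinkel theory alone yields only $L \subset E K^{\ur}$ with no bound on the unramified correction, and refining this to $\Gal(K' / K) \into G$ genuinely uses the description of all of $G_{K}^{\ab}$ (not just its inertia) supplied by Theorem \ref{Thm:LCFT}, together with the identification of the unramified correction with the cokernel $\Hom(G_{k}, G)$ from Proposition \ref{Prop:CokNorm}.
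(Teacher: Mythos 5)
Your overall strategy is the paper's own: part (\ref{Ass:If}) via the sheaf-theoretic norm machinery of Section \ref{Sect:LCFT}, part (\ref{Ass:OnlyIf}) via a deformation of the Eisenstein polynomial of $\pi_{L}$ controlled by Proposition \ref{Prop:CokNorm}. Part (\ref{Ass:OnlyIf}) is sound in substance (the paper runs the same construction in the direct rather than contrapositive direction), but you must supply the one verification you assert: ``$\tilde f$ has no root in $L$'' is not automatic from $\tilde f \equiv f \pmod{\pi_{K}^{m}}$. The quick argument is that a root $\tilde\pi \in L$ would be a prime of $\Order_{L}$ with $u = N_{L/K}(\tilde\pi/\pi_{L}) \in N_{L/K}L^{\times}$, contradicting the non-triviality of the class of $u$ in $K^{\times}/N_{L/K}L^{\times} \cong \Hom(G_{k}, G)$ from Proposition \ref{Prop:CokNorm}.

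The genuine gap is in part (\ref{Ass:If}), at ``incorporating $\alg{E}^{\times}$ and $\alg{K}^{\times}$ into the diagram.'' The map $N_{\eta}$ you build is defined only on unit groups; to invoke Proposition \ref{Prop:SeqFinExt} and Theorem \ref{Thm:LCFT} you need a morphism of short exact sequences involving the full multiplicative groups, i.e.\ an extension of $N_{\eta}$ to $\alg{E}^{\times} \to \alg{L}^{\times}/\alg{U}_{L}^{m}$ compatible with $N_{E/K}$ and $\overline{N_{L/K}}$. Constructing it requires writing $N_{E/K}\pi_{E}$ as an $L$-norm modulo $U_{K}^{m}$, and the obstruction is exactly the class of $N_{E/K}\pi_{E}$ in $K^{\times}/N_{L/K}L^{\times} \cong \Hom(G_{k}, G)$, which is generally non-zero. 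So the diagram cannot be formed over $K$, and $K'$ must be introduced \emph{before}, not after, applying the homotopy functors; your text instead forms the diagram over $K$ and then produces ``some finite unramified $K'/K$ with $\Gal(K'/K)$ embedding into $G$'' without deriving either its existence or the bound on its Galois group. The paper's mechanism is: let $\chi \colon G_{k} \to G$ be the character attached to $N_{E/K}\pi_{E}$ by Proposition \ref{Prop:CokNorm}, let $K'$ be the unramified extension corresponding to $\Ker(\chi)$ (whence $\Gal(K'/K) \cong \mathrm{im}(\chi) \subset G$ --- this is where the constraint on $\Gal(K'/K)$ comes from), and use the compatibility square in Proposition \ref{Prop:CokNorm} to see that after base change $N_{E'/K'}\pi_{E} = N_{L'/K'}\pi_{L'}$ for some prime $\pi_{L'}$, so that setting $\tilde N_{\eta'}\pi_{E} := \pi_{L'}$ extends the norm map. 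Your closing sentence correctly names $\Hom(G_{k}, G)$ as the relevant cokernel, but this construction is the core of the proof and must be made explicit rather than asserted.
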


\begin{proof}
	(\ref{Ass:OnlyIf}).
	Assume $L / K$ satisfies \FonPtr{m}.
	Since $G = \Gal(L / K)$ is a non-trivial abelian group killed by $p$,
	to show that $k$ has no Galois extension of degree $p$,
	it is enough to prove that $\Hom(G_{k}, G) = 0$,
	which is equivalent to showing that
	$N_{L / K} U_{L}^{\psi_{L / K}(m - 1)} = U_{K}^{m - 1}$
	by Proposition \ref{Prop:CokNorm}.
	Take $u \in U_{K}^{m - 1}$.
	Let $f(x) = x^{n} + a_{n - 1} x^{n - 1} + \dots + a_{1} x + a_{0}$
	be the minimal polynomial of a prime element $\pi_{L}$ of $\Order_{L}$.
	Then the modified polynomial
	$x^{n} + a_{n - 1} x^{n - 1} + \dots + a_{1} x + u a_{0} \in \Order_{K}[x]$
	is still an Eisenstein polynomial.
	Let $\pi_{E}$ be a root of this polynomial such that
	the function $v_{K}(\pi_{E} - \sigma \pi_{L})$ for $\sigma \in G$
	takes the maximum at $\sigma = 1$.
	Set $E = K(\pi_{E})$.
	Then we have a well-defined $\Order_{K}$-algebra homomorphism
	$\Order_{L} \to \Order_{E} / \ideal_{E / K}^{m}$
	that sends $\pi_{L}$ to $\pi_{E}$,
	since $v_{K}(f(\pi_{E})) = v_{K}((u - 1) a_{0}) \ge m$.
	Since $L / K$ satisfies \FonPtr{m} by assumption,
	we have $L \subset E$, which is actually an equality: $L = E$.
	Since $\pi_{L}$ and $\pi_{E}$ are both prime in $\Order_{L} = \Order_{E}$,
	their ratio $u' := \pi_{E} / \pi_{L}$ is in $U_{L} = U_{E}$.
	By \cite[the second proposition of \S 1.3]{Fon85},
	we have
		$
				\tilde{\psi}_{L / K}(v_{K}(f(\pi_{E})))
			=
				v_{K}(\pi_{E} - \pi_{L})
			=
				(v_{L}(u' - 1) + 1) / e_{L / K}
		$.
	Thus we have
		$
				v_{L}(u' - 1)
			\ge
				e_{L / K} \tilde{\psi}_{L / K}(m) - 1
			=
				\psi_{L / K}(m - 1)
		$.
	Hence $u' \in U_{L}^{\psi_{L / K}(m - 1)}$.
	Also we have
		$
				N_{L / K} u'
			=
				N_{L / K} \pi_{E} / N_{L / K} \pi_{L}
			=
				u a_{0} / a_{0}
			=
				u
		$.
	Hence $N_{L / K} U_{L}^{\psi_{L / K}(m - 1)} = U_{K}^{m - 1}$.
	This proves (\ref{Ass:OnlyIf}).
	
	(\ref{Ass:If}).
	Let $E$ be a totally ramified algebraic extension of $K$
	such that there exists an $\Order_{K}$-algebra homomorphism
	$\eta \colon \Order_{L} \to \Order_{E} / \ideal_{E / K}^{m}$.
	We may assume $E / K$ is finite.
	We regard $\Order_{E} / \ideal_{E / K}^{m}$
	as an $\Order_{L}$-module via $\eta$.
	By Lemma \ref{Lem:FiniteFree} below,
	we know that $\Order_{E} / \ideal_{E / K}^{m}$ is finite free
	over $\Order_{L} / \ideal_{L / K}^{m}$.
	Hence we get the corresponding norm map $N_{\eta}$
	from
		$
				(\Order_{E} / \ideal_{E / K}^{m})^{\times}
			=
				U_{E} / U_{E}^{m e_{E / K}}
		$
	to
		$
				(\Order_{L} / \ideal_{L / K}^{m})^{\times}
			=
				U_{L} / U_{L}^{m e_{L / K}}
		$.
	The map $N_{\eta}$ can be extended to a morphism
		$
				\alg{U}_{E} / \alg{U}_{E}^{m e_{E / K}}
			\to
				\alg{U}_{L} / \alg{U}_{L}^{m e_{L / K}}
		$
	of sheaves of abelian groups on $\pfpqc{k}$.
	By the transitivity of norm maps,
	we have $\overline{N_{L / K}} \compose N_{\eta} = \overline{N_{E / K}}$,
	where
		$
				\overline{N_{L / K}}
			\colon
				\alg{U}_{L} / \alg{U}_{L}^{m e_{L / K}}
			\to
				\alg{U}_{K} / \alg{U}_{K}^{m}
		$
	(resp.\
		$
				\overline{N_{E / K}}
			\colon
				\alg{U}_{E} / \alg{U}_{E}^{m e_{E / K}}
			\to
				\alg{U}_{K} / \alg{U}_{K}^{m}
		$)
	is the map induced by the norm map $N_{L / K}$
	for the finite extension $L / K$
	(resp.\ $N_{E / K}$ for $E / K$).
	Since $m e_{L / K} \ge \psi_{L / K}(m - 1) + 1$,
	the map $\overline{N_{L / K}}$ factors through the canonical projection
		$
				\alg{U}_{L} / \alg{U}_{L}^{m e_{L / K}}
			\onto
				\alg{U}_{L} / \alg{U}_{L}^{\psi_{L / K}(m - 1) + 1}
		$.
	By abuse of notation, we write $\overline{N_{L / K}}$
	for the map
		$
				\alg{U}_{L} / \alg{U}_{L}^{\psi_{L / K}(m - 1) + 1}
			\to
				\alg{U}_{K} / \alg{U}_{K}^{m}
		$.
	
	We show that there exists an unramified Galois extension $K' / K$
	with Galois group isomorphic to a subgroup of $G$
	such that the following holds:
	Let $k'$ be the residue field of $K'$
	and let $L' = L K'$, $E' = E K'$ and $N_{\eta'}$
	be the unramified base changes by $K' / K$.
	Then we can extend $N_{\eta'}$ to a morphism $\tilde{N}_{\eta'}$
	such that we have the following commutative diagram with exact rows
	of sheaves of abelian groups on $\pfpqc{k'}$:
		\[
			\begin{CD}
					0
				@>>>
					\Ker(N_{E' / K'})
				@>>>
					\alg{E}'^{\times}
				@>> N_{E' / K'} >
					\alg{K}'^{\times}
				@>>>
					0
				\\
				@.
				@VV \tilde{N}_{\eta'} V
				@VV \tilde{N}_{\eta'} V
				@VVV
				@.
				\\
					0
				@>>>
					\Ker(\overline{N_{L' / K'}})
				@>>>
					\alg{L}'^{\times} / \alg{U}_{L'}^{\psi_{L / K}(m - 1) + 1}
				@> \overline{N_{L' / K'}} >>
					\alg{K}'^{\times} / \alg{U}_{K'}^{m}
				@>>>
					0.
			\end{CD}
		\]
	The construction of $\tilde{N}_{\eta'}$ is as follows.
	Take a prime element $\pi_{E}$ of $\Order_{E}$.
	By the isomorphism of Proposition \ref{Prop:CokNorm},
	$N_{E / K} \pi_{E}$ defines a homomorphism $\chi \colon G_{k} \to G$.
	Let $k'$ correspond to $\Ker(\chi)$ via Galois theory.
	Let $K'$, $L'$, $E'$, $N_{\eta'}$ be the corresponding unramified base changes.
	The restriction $\chi|_{G_{k'}} \colon G_{k'} \to G$ is a trivial character.
	Hence the image of $N_{E / K} \pi_{E}$ in $K'^{\times} / N_{L' / K'} L'^{\times}$
	is trivial by Proposition \ref{Prop:CokNorm}.
	Thus $N_{E / K} \pi_{E}$ can be written as $N_{L' / K'} \pi_{L'}$
	for some prime $\pi_{L'}$ of $\Order_{L'}$.
	We define $\tilde{N}_{\eta'} \pi_{E} := \pi_{L'}$.
	Then the commutativity of the right square
	follows from the equality
	$\overline{N_{L / K}} \compose N_{\eta} = \overline{N_{E / K}}$
	as a map from $\alg{U}_{E}$ to $\alg{U}_{K} / \alg{U}_{K}^{m}$
	and the equality
		$
				N_{E' / K'} \pi_{E}
			=
				N_{L' / K'} \pi_{L'}
		$.
	
	The above diagram and
	the resulting long exact sequences of homotopy groups give
	the following commutative diagram:
		\[
			\begin{CD}
					\pi_{1}^{k'}(\alg{K}'^{\times})
				@>>>
					\pi_{0}^{k'}(\Ker(N_{E' / K'}))
				\\
				@VVV
				@VV \tilde{N}_{\eta'} V
				\\
					\pi_{1}^{k'}(\alg{K}'^{\times} / \alg{U}_{K'}^{m})
				@>>>
					\pi_{0}^{k'}(\Ker(\overline{N_{L' / K'}})).
			\end{CD}
		\]
	By Theorem \ref{Thm:LCFT}, Proposition \ref{Prop:SeqFinExt}
	and Proposition \ref{Prop:SeqRamFil},
	we can translate this diagram into%
	\footnote{For the left bottom corner,
	use the facts $\pi_{1}^{k'}(\alg{U}_{K'}^{m}) \cong (G_{K'}^{\ab})^{m}$
	and $\pi_{0}^{k'}(\alg{U}_{K'}^{m}) = 0$.
	The first fact is \cite[\S 3.2, Th.\ 1]{Ser61}
	and the second from the connectedness of $\alg{U}_{K'}^{m}$.}
		\[
			\begin{CD}
					G_{K'}^{\ab}
				@>>>
					\Gal(E' \cap K'^{\ab} / K')
				\\
				@VVV
				@VVV
				\\
					G_{K'}^{\ab} / (G_{K'}^{\ab})^{m}
				@>>>
					G = \Gal(L' / K').
			\end{CD}
		\]
	The horizontal arrows are restriction maps.
	The commutativity of this diagram
	shows that $L' \subset E' \cap K'^{\ab}$.
	Thus $L \subset E' = E K'$.
	This proves (\ref{Ass:If}).
\end{proof}

\begin{Lem}
	\label{Lem:FiniteFree}
	The $\Order_{L}$-module $\Order_{E} / \ideal_{E / K}^{m}$
	is killed by $\ideal_{L / K}^{m}$
	and is finite free over $\Order_{L} / \ideal_{L / K}^{m}$.
\end{Lem}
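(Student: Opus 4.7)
I would proceed in three steps. First, I would control the valuation of $\eta(\pi_L)$. Fix a uniformizer $\pi_L$ of $\Order_L$ (which generates $\Order_L$ over $\Order_K$ since $L/K$ is totally ramified) with Eisenstein minimal polynomial $f \in \Order_K[T]$, and let $\beta \in \Order_E$ be any lift of $\eta(\pi_L)$. The hypothesis that $\eta$ is a ring homomorphism forces $v_K(f(\beta)) \ge m$, and the second proposition of \S 1.3 of \cite{Fon85} yields
\[
    v_K(\beta - \sigma \pi_L) \ge \tilde{\psi}_{L/K}(m) = i_{L/K}
\]
for some $\sigma \in G$. In the single-break setting a direct computation gives $\psi_{L/K}(m-1) = m-1$, whence $i_{L/K} = \tilde{\psi}_{L/K}(m) = m/e_L$, strictly greater than $v_K(\sigma\pi_L) = 1/e_L$ since $m > 1$. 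The ultrametric inequality therefore forces $v_K(\beta) = 1/e_L$; in particular, $e_L$ divides $e_E$, and, setting $r := e_E/e_L$, we have $\beta \Order_E = \pi_E^r \Order_E$.

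Second, the annihilation $\ideal_{L/K}^m \cdot M = 0$, where $M := \Order_E/\ideal_{E/K}^m$, is now immediate: since $m$ is an integer, $\ideal_{L/K}^m = \pi_L^{m e_L} \Order_L$, and for any $y \in \Order_L$ the element $\eta(\pi_L^{m e_L} y) = \beta^{m e_L} \eta(y)$ lies in $\ideal_{E/K}^m$ because $v_K(\beta^{m e_L}) = m$. Hence $\eta$ descends to a local homomorphism $\bar{\eta} : A \to M$ of Artinian local rings with common residue field $k$, where $A := \Order_L/\ideal_{L/K}^m$.

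Third, the freeness follows from Nakayama together with a length count. Since $\beta \Order_E = \pi_E^r \Order_E$ with $r \le m e_E$, the quotient $M/\maxid_A M = M/\bar{\beta} M$ equals $\Order_E/\pi_E^r \Order_E$, which has length $r$ as an $\Order_K$-module. Lifting a $k$-basis, Nakayama produces a surjection $\phi : A^r \onto M$. Because $\Order_L$ is free of rank $e_L$ over $\Order_K$ and $\ideal_{L/K}^m = \pi_K^m \Order_L$, the $\Order_K$-length of $A$ is $m e_L$; likewise the length of $M$ is $m e_E$. Thus $A^r$ and $M$ have the same $\Order_K$-length $m e_E$, so $\phi$ is an isomorphism, exhibiting $M$ as free of rank $r = e_E/e_L$ over $A$.

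The main substantive input is Fontaine's proposition controlling $v_K(f(\beta))$; granted it, everything else is routine commutative algebra over Artinian local rings. The subtle point is the valuation computation $v_K(\beta) = 1/e_L$, which exploits both the single-break hypothesis (giving $i_{L/K} = m/e_L$) and the wildness of $L/K$ (giving $m > 1$).
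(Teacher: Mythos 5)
Your proof is correct, and it reaches both conclusions by a route genuinely different from the paper's. For the key valuation computation $v_{K}(\beta) = 1/e_{L / K}$ (where $\beta$ lifts $\eta(\pi_{L})$), the paper argues in one line from the Eisenstein relation: $u = \pi_{L}^{n}/\pi_{K}$ is a unit, so $\beta^{n} \equiv \eta(u)\pi_{K}$ modulo $\ideal_{E / K}^{m}$, and since $m > 1$ this forces $v_{K}(\beta^{n}) = 1$. You instead invoke Fontaine's Herbrand-function proposition together with the single-break hypothesis to get $v_{K}(\beta - \sigma\pi_{L}) \ge m/e_{L / K} > 1/e_{L / K}$ and conclude by the ultrametric inequality; this is valid but heavier, and it uses the unique-jump structure, which the elementary argument does not need (the lemma holds for any finite totally ramified $L / K$ and integer $m > 1$). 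For freeness, the paper computes the $\pi_{L}$-torsion of $T = \Order_{E}/\ideal_{E / K}^{m}$ directly, showing $T[\pi_{L}] = \pi_{L}^{nm-1}T$, and then applies the structure theorem for modules over a principal ideal domain (which forces all elementary divisors to equal $\pi_{L}^{nm}$); you use Nakayama plus an $\Order_{K}$-length count, producing a surjection $A^{r} \onto M$ with $A = \Order_{L}/\ideal_{L / K}^{m}$, $M = \Order_{E}/\ideal_{E / K}^{m}$ and $r = e_{E / K}/e_{L / K}$, and comparing lengths $r \cdot m e_{L / K} = m e_{E / K}$. Both arguments are complete; yours makes the rank $e_{E / K}/e_{L / K}$ explicit, while the paper's keeps the lemma entirely free of ramification theory.
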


\begin{proof}
	(Another proof can be found in \cite[Lem.\ 2.1]{HT08}.)
	Put $n = [L : K] = e_{L / K}$ and $T = \Order_{E} / \ideal_{E / K}^{m}$.
	Let $\pi_{K}$ be a prime element of $\Order_{K}$,
	let $\pi_{L}$ be a prime element of $\Order_{L}$
	and let $\eta \in \Order_{E}$ be a lift of
	$\eta(\pi_{L}) \in \Order_{E} / \ideal_{E / K}^{m}$.
	Since $u = \pi_{L}^{n} / \pi_{K}$ is a unit,
	$\eta(u)$ is a unit in $\Order_{E} / \ideal_{E / K}^{m}$.
	Since $\beta^{n} \equiv \eta(u) \pi_{K}$ in $\Order_{E} / \ideal_{E / K}^{m}$
	and $m > 1$, we have $v_{K}(\beta^{n}) = v_{K}(\pi_{K}) = 1$.
	Hence $v_{K}(\beta) = 1 / n$.
	Thus $\pi_{L}^{n m}$ kills $T = \Order_{E} / \ideal_{E / K}^{m}$.
	For an arbitrary element $\gamma \in \Order_{E}$
	that is a lift of an element of the $\pi_{L}$-torsion part $T[\pi_{L}]$ of $T$,
	we have $v_{K}(\beta \gamma) \ge m$.
	Hence $v_{K}(\gamma) \ge m - 1 / n = v_{K}(\beta^{n m - 1})$.
	Hence $\gamma \in \beta^{n m - 1} \Order_{E}$.
	Thus $T[\pi_{L}] = \pi_{L}^{n m - 1} T$.
	Applying the structure theorem of modules over a principal ideal domain,
	we get the result.
\end{proof}

\begin{Rmk}
	\label{Rmk:GeneralPerfect}
	One can use Propositions \ref{Prop:RedAb} and \ref{Prop:RedTR}
	to get a generalization of assertion (\ref{Ass:If})
	of Proposition \ref{Prop:RedMainTh}
	for a finite wild Galois extension $L / K$.
	The result is the following.
	If $E$ is an algebraic extension of $K$
	and if there exists an $\Order_{K}$-algebra homomorphism
	$\Order_{L} \to \Order_{E} / \ideal_{E / K}^{m}$ for $m = u_{L / K}$,
	then the $\Gal(L / K)^{(m)}$-fixed subfield $M$ of $L$ is contained in $E$,
	and there exists a finite unramified subextension $M' / M$ of $E / M$
	and an unramified Galois extension $M'' / M'$ with $\Gal(M'' / M')$
	isomorphic to a subgroup of $\Gal(L / K)^{(m)}$
	such that $L \subset E M''$.
\end{Rmk}

\end{document}